

\documentclass[12pt, twoside, english, headsepline]{article}


\usepackage[margin=30mm]{geometry}
%
%

%
\usepackage{tikz,graphicx}
\usetikzlibrary{arrows}
\usetikzlibrary{patterns}

\tikzset{
  mid arrow/.style={postaction={decorate,decoration={
        markings,
        mark=at position .5 with {\arrow[#1]{stealth}}
      }}},
}
\usetikzlibrary{decorations.markings} 
\tikzset
 {every pin/.style = {pin edge = {<-}}, 
  > = stealth, 
  flow/.style = 
   {decoration = {markings, mark=at position #1 with {\arrow{>}}},
    postaction = {decorate}
   },
  flow/.default = 0.5,   
  main/.style = {line width=1pt}
 }

\usepackage{amsmath,amsthm,amssymb,amsfonts, mathtools, amsbsy, dsfont}
\usepackage[utf8]{inputenx}
\usepackage{xcolor}
\usepackage{microtype}
\usepackage{hyperref}
\hypersetup{linkcolor=blue}



\newcommand*\dif{\mathop{}\!\mathrm{d}}  


\allowdisplaybreaks 

\newtheorem{theorem}{Theorem}[section] 
\newtheorem{corollary} {Corollary}[section] 
\newtheorem{proposition}{Proposition}[section] 

\theoremstyle{definition} 

\newtheorem{remark}{Remark}[section] 
\numberwithin{equation}{section} 


\providecommand{\keywords}[1]
{
  \small	
  \textbf{\textit{Keywords: }} #1
}
\providecommand{\MSC}[1]
{
  \small	
  \textit{2020 MSC: } #1   
}


\title{Interacting point processes}
\author{Fabrizio Cinque$^1$ and Enzo Orsingher$^2$\\
        \small Department of Statistical Sciences, Sapienza University of Rome, Italy \\
        \small $^1$fabrizio.cinque@uniroma1.it $^2$enzo.orsingher@uniroma1.it
}




\begin{document}

\maketitle

\begin{abstract}
We study two different types of vector point processes with interacting components, introducing a migration-type effect. The first case concerns two groups which modify their states with rate functions depending on time only. This yields a representation of the vector process in terms of independent non-homogeneous Skellam processes. In the general case, the decomposition involves independent Poisson processes. The second model is a birth-death-migration vector process. In the case of the linear death-migration we show that, for a fixed time instant, the vector  is equal in distribution to the sum of two independent Multinomial random variables. As a byproduct we derive the distribution of a pure migration process. Finally, we study the described vector processes time-changed with the inverse of Bernstein subordinator, establishing a general result concerning the relatioship between fractional difference-differential equations and the probability mass function of a wider class of point processes.
\end{abstract} \hspace{10pt}

\keywords{Non-homogeneous Skellam process; Birth-death process; Migration process; Convolution-type derivative; Inverse subordinators}

\MSC{Primary 60G55; Secondary 60G22}



\section{Introduction}

In this paper we introduce two different types of vector point processes with interacting components, introducing a migration effect, one related to the Skellam-Poisson processes and one related to birth-death processes. Similar topics were deeply discussed by numerous remarkable authors who introduced stochastic versions of interacting growth models, see \cite{B1973, P1975, W1963} and the references therein. Recently, some results concerning point processes with immigration appeared in the papers \cite{SN2025, VK2024, VK2025} where the authors considered a one-dimensional birth-death process with some immigration policies. A relevant difference (both theoretically and mathematically) of our research lies on the fact that instead of assuming a migration from an outer source, also see \cite{B1970}, we assume that the groups can exchange their units. The interested reader can also find in the literature other recent evolution models, see for instance \cite{DcP2024} and the references therein.

Skellam processes first appeared in the paper \cite{S1946} and they have been recently generalized and studied in further detail by several authors, see \cite{BS2024, CO2025, GKL2020} and the references therein. We recall that, with $\mathcal{I}\subset \mathbb{R}\setminus\{0\}, \ |\mathcal{I}|<\infty$ and integrable $\lambda_i : [0,\infty] \longrightarrow [0, \infty)$ such that $\Lambda_i(t) = \int_0^t \lambda_i(s)\dif s<\infty,\ \forall\ t\ge0,\ i\in \mathcal{I}$, the probability generating function of a non-homogeneous generalized Skellam process $S$ is
\begin{equation}\label{generatriceProbabilitaSkellam}
	\mathbb{E} u^{S(t)} = \mathbb{E} u^{\sum_{i\in\mathcal{I}} i N_i(t)} = e^{-\sum_{i\in\mathcal{I}} \Lambda_i(t)(1-u^i) }.
\end{equation}
where in the first equality one uses the fundamental relationship $S(t) \stackrel{d}{=} \sum_{i\in\mathcal{I}} i N_i(t)$ with $N_i$ being independent Poisson processes with rate functions $\lambda_i$, see Theorem 2.1 of \cite{CO2025}.

These generalized Skellam processes contain as a particular case also the so-called Poisson process of order $K$ (also known as generalized counting process) introduced in \cite{P1984} and recently studied in a series of papers started with \cite{DcMM2016}. Generalized Skellam processes have good probabilistic properties and a wide spectrum of possible applications, ranging from insurance applications to modeling the intensity difference of pixels in cameras \cite{HJK2007} or the difference of the number of goals of two competing teams in a football game \cite{KN2008}.

In Section 2, we introduce a stochastic vector representing two groups which modify their states with rate functions depending on time only, see equation \eqref{definizioneInfinitesimaleCoppiaTipoSkellam}, therefore we have a structure resembling non-homogeneous dependent Skellam-type processes where the components may increase/decrease simultaneously or exchange units, that is a migration effect. Under the given assumptions, we prove that the stochastic vector can be decomposed into the sum of independent classical Skellam processes, see Theorem \ref{teoremaCoppiaSkellamTerminiSkellam}. This representation is extremely useful and completely characterizes the vector process. Moreover, it permits us to derive further properties and, in the homogeneous case, we can express the vector process as a compound vector Poisson process, see Proposition \ref{proposizioneSkellamInterazioneOmogeneoPoissonComposto}. Finally, we show that a Skellam decomposition holds even if we assume more interacting groups, see Section \ref{sottosezioneCasoGeneraleSkellamMultidimensionale}. On the other hand, in a generalized bi-dimensional setting where the two components can increase, decrease or exchange different amounts of units, a decomposition in terms of independent Poisson processes holds, see Theorem \ref{teoremaProcessoInterazioneSkellamGenerale}.
\\

Section 3 is devoted to the birth-death-migration vector process, where two birth-death population can exchange their units with a rate function depending on the number of elements in the departure group, as described in \eqref{definizioneInfinitesimaleProcessoNascitaMorteInterazione}. This kind of model may well prove to be capable of representing a collection of queues in service systems, epidemics and the evolution of interacting populations.
\\Birth-death processes have been studied since the early decades of the last century and presented in a general form by Kendall \cite{K1948}. In the last years birth-death processes have been the object of several works, see for instance \cite{OP2011, OPS2010} and recently a generalized version was developed in \cite{VK2024}.

After an overview on the birth-death-migration process, we focus on the case of death-migration processes and we prove that, for a fixed time $t$, the state of the vector can be represented by the sum of independent Multinomial random variables. This also permits us to derive results concerning the extinction probability. As a byproduct we obtain a pure migration process, where the two groups can only exchange units and therefore one component in sufficient to describe the vector (since the total number of units is constant in time).
\\

The last section concerns time-changed versions of the stochastic vector models analyzed in Sections 2 and 3. In detail, we consider Bernstein subordinator processes assuming the role of the time, which furnishes the system with a long memory property. Particular cases of this kind of subordination can be traced back to \cite{L2003} where the author derived a Poisson porcess time-changed with the inverse of a stable subordinator, connecting it to the Dzerbashyan-Caputo fractional operator. This theory widely developed and several papers applied it to different types of processes, among the others, we refer to \cite{AetAl2022, ALP2020, C2022, GOP2015, GK2023, PKS2011} and references therein. For papers focusing on Skellam-type processes see, for instance, \cite{CO2025, DcMM2016, GKL2020, KK2024} and for the birth-death processes see \cite{OPS2010, OP2011, VK2024}.

Here, we provide a general result regarding point processes time changed with the inverse of subordinators of Bernstein types. A simple application of this result permits us to derive a fractional difference-differential characterization of the time-changed versions of the vector process introduced in Sections 2 and 3 (as well as several results established in the cited papers).



\section{Interacting Skellam-type processes}\label{sezioneProcessiSkellamInterazione}

Let $(N_1,N_2) = \bigl\{\big (N_1(t),N_2(t)\big)\big\}_{t\ge0}$ be a stochastic vector process describing the evolution of two interacting groups of elements, taking values in $\mathbb{Z}^2$ for $t\ge0$. We assume that the initial composition of the vector is $(N_1(0), N_2(0)) = (n_1, n_2)\in\mathbb{Z}^2$ a.s., that the process has independent increments and the following infinitesimal transition probability mass function
\begin{align}\label{definizioneInfinitesimaleCoppiaTipoSkellam}
P\{N_1(t+\dif t) &= h+i,\, N_2(t+\dif t) = k+j\,|\,N_1(t) = h,\, N_2(t) = k\}  \\
& = \begin{cases}
\begin{array}{l l}
\lambda_1(t)\lambda_2(t)\dif t + o(\dif t), & i,j=1,\\
\mu_1(t)\mu_2(t)\dif t + o(\dif t), & i,j=-1,\\
\lambda_1(t)\mu_2(t) \dif t + \eta_{21}(t)\dif t + o(\dif t), & i=1,j=-1,\\
\mu_1(t)\lambda_2(t)\dif t  +\eta_{12}(t)\dif t + o(\dif t), & i=-1,j=1,\\
\lambda_1(t)\delta_2(t)\dif t + o(\dif t), & i=1,j=0,\\
\delta_1(t)\lambda_2(t)\dif t + o(\dif t), & i=0,j=1,\\
\mu_1(t)\delta_2(t)\dif t + o(\dif t), & i=-1,j=0,\\
\delta_1(t)\mu_2(t)\dif t + o(\dif t), & i=0,j=-1,\\
1-\big(\lambda_1(t)\lambda_2(t) + \mu_1(t)\mu_2 (t)\\
\ \ \ \ \ +\lambda_1(t)\mu_2 (t)+ \eta_{21}(t) + \mu_1(t)\lambda_2 (t)+\eta_{12}(t) \\
\ \ \ \ \ + \lambda_1(t)\delta_2(t) + \delta_1(t)\lambda_2(t) \\
\ \ \ \ \ + \mu_1(t)\delta_2(t) + \delta_1(t)\mu_2(t)\big) \dif t + o(\dif t), & i,j=0, \\
o(\dif t), & \text{otherwise}.
\end{array}
\end{cases} \nonumber
\end{align}
Thus, the groups can evolve by acquiring or losing one unit or by transferring one element from one group to the other. By definition of the transition probability mass both groups can modify their composition in the same moment, but at most one event (i.e. increasing, loss or transfer) can happen during each infinitesimal interval. In particular, the rate function $\eta_{12}$ concerns the movement of one element from $N_1$ to $N_2$ while $\eta_{21}$ represents the migration in the opposite direction.

Let us denote by $p_{h,k}(t) = P\{N_1(t) = h,N_2(t) = k\}, \ h,k\in\mathbb{Z}$ and $t\ge0$. From the infinitesimal transition probability mass in \eqref{definizioneInfinitesimaleCoppiaTipoSkellam}, with standard arguments, we derive the following difference-differential equation, for $t\ge0, h,k\in\mathbb{Z}$ (we omit the explicit dependence of the time for the rate functions),
\begin{align}
\frac{\dif p_{h,k}(t)}{\dif t} &= -(\lambda_1\lambda_2 + \mu_1\mu_2 +\lambda_1\mu_2 + \eta_{21} + \mu_1\lambda_2 +\eta_{12} + \lambda_1\delta_2 + \delta_1\lambda_2 + \mu_1\delta_2 + \delta_1\mu_2) p_{h,k}(t)\nonumber\\
& \ \ \ + \lambda_1\lambda_2 p_{h-1,k-1}(t)+ \mu_1\mu_2p_{h+1,k+1}(t) \nonumber\\
&\ \ \ + \lambda_1\delta_2 p_{h-1,k}(t)+ \delta_1\lambda_2 p_{h,k-1}(t)+ \mu_1\delta_2 p_{h+1,k}(t) + \delta_1\mu_2 p_{ h,k+1}(t)\nonumber\\
&\ \ \  +(\lambda_1\mu_2 + \eta_{21})p_{h-1+k+1}(t) + (\mu_1\lambda_2 +\eta_{12})p_{h+1,k-1}(t).\label{equazioneStatoProcessoSkellamInterazione}
\end{align}
By using the fact that $\sum_{h,k = -\infty}^\infty u^h v^k p_{h+i, k+j}(t) = u^{-i}v^{-j}\sum_{h,k=-\infty}^\infty u^h v^k p_{h,k}(t) =  u^{-i}v^{-j}G(t,u,v), $ for $i,j\in\mathbb{Z}$ and suitable $u,v\in [-1,1]$, from the previous equation we obtian that
\begin{align}
\frac{\partial G(t,u,v)}{\partial t} &= -(\lambda_1\lambda_2 + \mu_1\mu_2 +\lambda_1\mu_2 + \eta_{21} + \mu_1\lambda_2 +\eta_{12} + \lambda_1\delta_2 + \delta_1\lambda_2 + \mu_1\delta_2 + \delta_1\mu_2) G(t,u,v) \nonumber\\
&\ \ \ + \Biggl(\lambda_1\lambda_2 uv + \frac{\mu_1\mu_2}{uv}  +(\lambda_1\mu_2 + \eta_{21})\frac{u}{v} + (\mu_1\lambda_2 +\eta_{12})\frac{v}{u}  + \lambda_1\delta_2 u+ \delta_1\lambda_2 v\nonumber\\
&\ \ \ + \frac{\mu_1\delta_2}{u} + \frac{\delta_1\mu_2}{v}\Biggr) G(t, u,v).
\end{align}

By keeping in mind the initial conditions, we obtain that the probability generating function reads
\begin{align}
G(t,u,v)&= u^{n_1}v^{n_2} \text{exp}\Biggl(-\int_0^t  \biggl[\lambda_1(s)\lambda_2(s) (1-uv) + \mu_1(s)\mu_2(s)\Bigl(1-\frac{1}{uv}\Bigr)\nonumber\\
&\ \ \ +\big(\lambda_1(s)\mu_2(s) + \eta_{21}(s)\big)\Big(1-\frac{u}{v}\Big) + \big(\mu_1(s)\lambda_2(s) +\eta_{12}(s)\big)\Big(1-\frac{v}{u}\Big)\nonumber \\
&\ \ \ + \lambda_1(s)\delta_2(s)(1-u) + \delta_1(s)\lambda_2(s)(1-v)\nonumber\\
&\ \ \  + \mu_1(s)\delta_2(s)\Big(1-\frac{1}{u}\Big) + \delta_1(s)\mu_2(s)\Big(1-\frac{1}{v}\Big)\biggr]  \dif s\Biggr)\label{funzioneGeneratriceCoppiaTipoSkellam}\\
&=:  u^{n_1}v^{n_2} \text{exp}\Bigl(-\int_0^t R(s,u,v)\dif s\Bigr).\nonumber
\end{align}

The independence of the increments of the vector $(N_1, N_2)$ implies that the probability generating function of the increments is given by, with $0<s<t$, 
\begin{align}\label{funzioneGeneratriceIncrementiCoppiaTipoSkellam}
\mathbb{E} u^{N_1(t)-N_1(s)}v^{N_2(t)-N_2(s)} = \frac{G(t,u,v)}{G(s,u,v)} =  \text{exp}\Bigl(-\int_s^t R(r,u,v)\dif r\Bigr),
\end{align}
where in the last equality we used \eqref{funzioneGeneratriceCoppiaTipoSkellam}. From \eqref{funzioneGeneratriceIncrementiCoppiaTipoSkellam} we notice that the increments are stationary if and only if the rates functions are all constant (which implies that the function $R$ does not depend on the time variable $r$) and the initial values are $n_1=n_2=0$. Omitting the last condition, the shifted vector $(N_1-n_1,N_2-n_2)$ is stationary.

The joint probability generating function (\ref{funzioneGeneratriceCoppiaTipoSkellam}) yields the marginal probability generating function
\begin{align}
G_{1}(t,u)& = \mathbb{E}u^{N_1(t)} = G(t,u,1) \nonumber\\
&=\text{exp}\Biggl(-\int_0^t  \biggl[\Bigl(\lambda_1(s)\big(\lambda_2(s) +\mu_2(s)  +\delta_2(s)\big)+\eta_{21}(s)\Big) (1-u)\nonumber \\
& \ \ \ + \Bigl(\mu_1(s)\big(\lambda_2(s) +\mu_2(s) + \delta_2(s)\big)+\eta_{12}(s)\Big)\Bigl(1-\frac{1}{u}\Bigr)\biggr]\dif s\Biggr),\label{funzioneGeneratriceMarginaleCoppiaSkellam}
\end{align}
which is the probability generating function of a non-homogeneous Skellam process with rate functions $\bar\lambda_1 = \lambda_1(\lambda_2 +\mu_2  +\delta_2)+\eta_{21}, \bar\mu_1=\mu_1(\lambda_2 +\mu_2  +\delta_2)+\eta_{12}$, see \eqref{generatriceProbabilitaSkellam}.

Similarly, we obtain that the process $N_2$ is a non-homogeneous Skellam process with rate functions $\bar\lambda_2 = \lambda_2(\lambda_1 +\mu_1  +\delta_1)+\eta_{12}, \bar\mu_2=\mu_2(\lambda_1 +\mu_1  +\delta_1)+\eta_{21}$.  Thus, the interested reader can study in detail the properties of the marginal components $N_1,N_2$. For instance, the marginal probability mass function follows from the well-known result that a non-homogenoeus Skellam process at a fixed time $t$ is Skellam random variable and its distribution is expressed in terms of the modified Bessel functions $I_\nu(z) = \sum_{k=0}^\infty (z/2)^{2k+\nu}/ (k! \Gamma(k+\nu + 1))$, with $\nu\in \mathbb{R}$ and $x\in\mathbb{C}$. In detail, if $S$ is a non-homogeneous Skellam process with rate functions $\lambda_1$ and $\lambda_{-1}$ such that $\Lambda_1(t) = \int_0^t \lambda_1(s)\dif s, \Lambda_{-1}(t) = \int_0^t \lambda_{-1}(s)\dif s$ exist finite for each $t\ge0$ (we write $S\sim NHGSP (\lambda_{1}, \lambda_{-1})$), then
\begin{equation}\label{probabilitaMassaSkellamClassico}
	P\{S(t) = n\}= e^{-\bigl(\Lambda_1(t)+\Lambda_{-1}(t)\bigr)} \Biggl(\sqrt{\frac{\Lambda_1(t)}{\Lambda_{-1}(t)}}\Biggr)^n I_n\Bigl(2\sqrt{\Lambda_1(t)\Lambda_{-1}(t)}\Bigr), \ \ \ n\in\mathbb{Z}, \ t\ge0,
\end{equation}
which was proved by Skellam in \cite{S1946} in the homogenous case. We refer to \cite{CO2025, GKL2020, S1946} and references therein for further results on Skellam-type processes.

The next statement provides a Skellam decomposition of the vector $(N_1, N_2)$.

\begin{theorem}\label{teoremaCoppiaSkellamTerminiSkellam}
Let $(N_1,N_2)$ be the stochastic vector defined in \eqref{definizioneInfinitesimaleCoppiaTipoSkellam}. Then,
\begin{equation}\label{decomposizioneCoppiaSkellam}
N_1 \stackrel{d}{=} S_1+S_3+S_4 \ \text{ and }\ N_2 \stackrel{d}{=} S_2+S_3-S_4,
\end{equation}
 where the terms are independent non-homogeneous (classical) Skellam processes
\begin{align}
&S_1\sim NHSP(\lambda_1\delta_2, \mu_1\delta_2),\ S_1(0) = n_1, \ \ \ S_2\sim NHSP(\delta_1\lambda_2,\delta_1\mu_2),\ S_2(0) = n_2,\nonumber\\
&S_3\sim NHSP(\lambda_1\lambda_2, \mu_1\mu_2),\ S_3(0) =0,\ \ \ S_4\sim NHSP(\lambda_1\mu_2+\eta_{21}, \mu_1\lambda_2+\eta_{12}),\ S_4(0) = 0.\label{processiSkellamDecomposizioneSkelamInterazione}
\end{align}
\end{theorem}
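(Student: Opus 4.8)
The plan is to prove the stronger statement that the vector processes $(N_1,N_2)$ and $(S_1+S_3+S_4,\,S_2+S_3-S_4)$ coincide in law, which in particular yields the two marginal identities in \eqref{decomposizioneCoppiaSkellam}. Both vectors have independent increments — the left-hand side by assumption, the right-hand side because it is assembled from the mutually independent processes $S_1,S_2,S_3,S_4$, each of which has independent increments — and both start at $(n_1,n_2)$. Hence, as recorded in \eqref{funzioneGeneratriceIncrementiCoppiaTipoSkellam}, their finite-dimensional distributions are entirely determined by the single-time joint probability generating function through the increment factor $G(t,u,v)/G(s,u,v)$. It therefore suffices to check that the joint probability generating function of the right-hand side equals $G(t,u,v)$ given in \eqref{funzioneGeneratriceCoppiaTipoSkellam}.

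First I would exploit the mutual independence of the four Skellam processes to factorize
\begin{equation*}
\mathbb{E}\, u^{S_1(t)+S_3(t)+S_4(t)}\, v^{S_2(t)+S_3(t)-S_4(t)} = \mathbb{E}\, u^{S_1(t)}\;\mathbb{E}\, v^{S_2(t)}\;\mathbb{E}\,(uv)^{S_3(t)}\;\mathbb{E}\,(u/v)^{S_4(t)},
\end{equation*}
where I have grouped the variables according to how each $S_i$ enters the two components. Each factor is the probability generating function of a non-homogeneous Skellam process evaluated at the appropriate argument, so by \eqref{generatriceProbabilitaSkellam} (with the initial value contributing a monomial prefactor) each one takes the form $z^{S_i(0)}\exp(-\int_0^t[\,\alpha_i(s)(1-z)+\beta_i(s)(1-z^{-1})\,]\dif s)$, with $(\alpha_i,\beta_i)$ the rate pair listed in \eqref{processiSkellamDecomposizioneSkelamInterazione} and $z\in\{u,v,uv,u/v\}$ respectively.

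Next I would insert the four rate pairs and collect the four exponents into a single integral. The prefactors combine to $u^{S_1(0)}v^{S_2(0)}(uv)^{S_3(0)}(u/v)^{S_4(0)} = u^{n_1}v^{n_2}$, matching the initial monomial in \eqref{funzioneGeneratriceCoppiaTipoSkellam}. The contributions to the integrand are then $\lambda_1\delta_2(1-u)+\mu_1\delta_2(1-u^{-1})$ from $S_1$, $\delta_1\lambda_2(1-v)+\delta_1\mu_2(1-v^{-1})$ from $S_2$, $\lambda_1\lambda_2(1-uv)+\mu_1\mu_2(1-(uv)^{-1})$ from $S_3$, and $(\lambda_1\mu_2+\eta_{21})(1-u/v)+(\mu_1\lambda_2+\eta_{12})(1-v/u)$ from $S_4$; their sum is exactly $R(s,u,v)$ as read off from \eqref{funzioneGeneratriceCoppiaTipoSkellam}. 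This identifies the two generating functions and closes the argument.

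The computation is essentially bookkeeping, and the only point requiring genuine care is the reduction in the first step: one must be sure that equality of the single-time joint generating function, rather than of all finite-dimensional ones, is enough. This is legitimate precisely because both vector processes have independent increments and share the same deterministic starting point, so that the increment generating function of \eqref{funzioneGeneratriceIncrementiCoppiaTipoSkellam} determines every finite-dimensional law; without the independent-increments structure the single-time identity alone would not suffice.
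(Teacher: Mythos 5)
Your proof is correct and takes essentially the same route as the paper's: both verify that the joint probability generating function of $(S_1+S_3+S_4,\,S_2+S_3-S_4)$ factorizes by independence into the four Skellam generating functions evaluated at $u$, $v$, $uv$, $u/v$, and that inserting the rate pairs from \eqref{processiSkellamDecomposizioneSkelamInterazione} reproduces \eqref{funzioneGeneratriceCoppiaTipoSkellam}. Your additional justification of why the single-time identity suffices (independent increments plus \eqref{funzioneGeneratriceIncrementiCoppiaTipoSkellam}) makes explicit a point the paper leaves implicit.
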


\begin{proof}
It is sufficient to prove that the joint probability generating function of the right-hand sides of \eqref{decomposizioneCoppiaSkellam} coincides with \eqref{funzioneGeneratriceCoppiaTipoSkellam}. For $t\ge0$ and suitable $u,v$,
\begin{align}
\mathbb{E}u^{S_1(t)+S_3(t)+S_4(t)}v^{S_2(t)+S_3(t)-S_4(t)} &= \mathbb{E}u^{S_1(t)} \mathbb{E} v^{S_2(t)} \mathbb{E} (uv)^{S_3(t)} \mathbb{E} \Big(\frac{u}{v}\Big)^{S_4(t)}
\end{align}
which yields \eqref{funzioneGeneratriceCoppiaTipoSkellam} by keeping in mind \eqref{processiSkellamDecomposizioneSkelamInterazione} and introducing the probability generating function of the Skellam processes in  \eqref{generatriceProbabilitaSkellam}.
\end{proof}

In the decomposition of Theorem \ref{teoremaCoppiaSkellamTerminiSkellam}, the Skellam processes $S_1$ and $S_2$ count the jumps when only $N_1$ or $N_2$, respectively, changes. The process $S_3$ concerns the jumps when the components of the vector change with a concordant sign. The process $S_4$ concerns the jumps when the components of the vector change with a non-concordant sign or a migration occurs. In particular, the process $S_4$ may also be decomposed in the sums of two independent Skellam processes, one concerning the jumps with non-concordant signs (having rate functions $\lambda_1\mu_2$ and $\mu_1\lambda_2$) and one concerning the migrations (having rate functions $\eta_{21}$ and $\eta_{12}$).
\\

Thanks to Theorem \ref{teoremaCoppiaSkellamTerminiSkellam} one can study further properties of the stochastic vector process $(N_1,N_2)$ as shown in the following remarks.

\begin{remark}\label{remarkConteggioEventiSkellamInterazione}
Keeping in mind that a Skellam process can be decomposed as the difference of two independent Poisson processes, the decomposition in Theorem \ref{teoremaCoppiaSkellamTerminiSkellam} also permits us to study processes which count the number of "events" occurring, for instance,
\begin{equation}\label{processoConteggioVariazioniSkellamInterazione}
 N(t) =\big|\big\{0<s\le t\,:\, \big(N_1(s-\dif s),N_2(s-\dif s)\big) \not=\big(N_1(s),N_2(s)\big)\big\}\big|,\ \ \ t\ge0,
\end{equation}
where $|A|$ denotes the number of elements of the set $A$. The stochastic process $N$ counts the number of times that the vector $(N_1, N_2)$ modifies its state. With \eqref{decomposizioneCoppiaSkellam} at hand, $N(t)$ can be expressed as the 
$$ N(t) = \sum_{i=1}^4  \big|\{0<s\le t\,:\, S_i(s-\dif s) \not=S_i(s)\}\big|$$
and each term of the sum is an independent non-homogeneous Poisson process with rate function given by the sum of the rate functions of the corresponding Skellam process, see Remark 2.3 of \cite{CO2025}. Therefore $N$ is a non-homogeneous Poisson process with rate function $\bar\lambda = \lambda_1\lambda_2 + \mu_1\mu_2 +\lambda_1\mu_2 + \eta_{21} + \mu_1\lambda_2 +\eta_{12} + \lambda_1\delta_2 + \delta_1\lambda_2 + \mu_1\delta_2 + \delta_1\mu_2$.

In light of a similar argument, each type of events is counted by a non-homogeneous Poisson process. For instance, the number of migrations from $N_1$ to $N_2$ behaves as a Poisson process with rate function $\eta_{12}$.
\hfill $\diamond$
\end{remark}

\begin{remark}[Covariance]
We derive the covariance between the two components, for $0\le s\le t$,
\begin{align}
\text{Cov}\Bigl(N_1(s), N_2(t)\Big)& = \text{Cov} \Big(S_1(s)+S_3(s)+S_4(s),S_2(t)+S_3(t)-S_4(t)\Big)\nonumber\\
& =  \text{Cov} \Big(S_3(s),S_3(t)\Big) -  \text{Cov} \Big(S_4(s),S_4(t)\Big) \nonumber\\
& = \mathbb{V}S_3(s) - \mathbb{V}S_4(s) \nonumber\\
& = \int_0^s\Bigl[\big(\lambda_1(r)-\mu_1(r)\big)\big(\lambda_2(r) - \mu_2(r)\big)-(\eta_{12} + \eta_{21})\Big]\dif r. \label{covarianzaComponentiCoppiaSkellam}
\end{align}
In the above steps we used formulas (2.9) and (2.11) of \cite{CO2025}, where the reader can also find further details on the autocovariance of the non-homogeneous Skellam process. Formula \eqref{covarianzaComponentiCoppiaSkellam} also implies that $\text{Cov}\big(N_1(s), N_2(t)\big) = \text{Cov}\big(N_1(s), N_2(s)\big)$. Note that, as expected, the autocovariance of $S_3$ (which coincides with its variance at the lower time instant) increases the covariance between $N_1$ and $N_2$ since it concerns their concordant jumps, while the autocovariance of $S_4$ has the opposite effect since it concerns the non-concordant jumps and the migrations.
\hfill $\diamond$
\end{remark}

\begin{remark}
With the Skellam distribution \eqref{probabilitaMassaSkellamClassico} at hand, Theorem \ref{teoremaCoppiaSkellamTerminiSkellam} permits us to write an explicit form of the joint probability mass of $\big(N_1(t),N_2(t)\big)$. Let us denote the integral of the rate functions as follows, with $t\ge0$, $\Lambda_1^+(t) = \int_0^t\lambda_1(s)\delta_2(s)\dif s$, $\Lambda_1^-(t) = \int_0^t  \mu_1(s)\delta_2(s)\dif s$, $\Lambda_2^+(t) = \int_0^t \delta_1(s)\lambda_2(s)\dif s$, $\Lambda_2^-(t) = \int_0^t \delta_1(s)\mu_2(s)\dif s$, $\Lambda_3^+(t) = \int_0^t \lambda_1(s)\lambda_2(s)\dif s, \Lambda_3^-(t) = \int_0^t \mu_1(s)\mu_2(s)\dif s$, $\Lambda_4^+(t) = \int_0^t \big(\lambda_1(s)\mu_2(s)+\eta_{21}(s)\big)\dif s, \Lambda_4^-(t) = \int_0^t \big(\mu_1(s)\lambda_2(s)+\eta_{12}(s)\big)\dif s$. Thus, for $t\ge0$ and $m,n\in \mathbb{Z}$,
\begin{align}
&P\{N_1(t) = m,N_2(t)=n\} \\
& = \sum_{h,k = 0}^\infty P\{S_1 = h\} P\{S_2 = k\} P\{S_3+S_4 = m-h, S_3-S_4 = n-k\} = \nonumber \\
& = e^{-\Lambda_1^+(t)-\Lambda_1^-(t) - \Lambda_2^+(t)-\Lambda_2^-(t) -\Lambda_3^+(t)-\Lambda_3^-(t) -\Lambda_4^+(t)-\Lambda_4^-(t) }\nonumber\\
&\ \ \ \times \sum_{h,k = 0}^\infty \Bigg(\frac{\Lambda_1^+(t)}{\Lambda_1^-(t)}\Bigg)^{\frac{h}{2}} \Bigg(\frac{\Lambda_2^+(t)}{\Lambda_2^-(t)}\Bigg)^{\frac{k}{2}}  \Bigg(\frac{\Lambda_3^+(t)}{\Lambda_3^-(t)}\Bigg)^{\frac{m-h+n-k}{2}} \Bigg(\frac{\Lambda_4^+(t)}{\Lambda_4^-(t)}\Bigg)^{\frac{m-h-n+k}{2}} \nonumber \\
&\ \ \ \times I_{h}\Big(2 \sqrt{\Lambda_1^+(t)\Lambda_1^-(t)}\Big) I_{k}\Big(2 \sqrt{\Lambda_2^+(t)\Lambda_2^-(t)}\Big) I_{\frac{m-h+n-k}{2}}\Big(2 \sqrt{\Lambda_3^+(t)\Lambda_3^-(t)}\Big) I_{\frac{m-h-n+k}{2}}\Big(2 \sqrt{\Lambda_4^+(t)\Lambda_4^-(t)}\Big). \nonumber
\end{align}
\hfill$\diamond$
\end{remark}

\begin{remark}
Theorem \ref{teoremaCoppiaSkellamTerminiSkellam} implies that a linear combination of the marginal processes $N_1$ and $N_2$ is a non-homogeneous generalized Skellam process. Indeed, for $a,b\in\mathbb{R}$,
\begin{align*}
&aN_1+bN_2 \\
&= aS_1 + bS_2 +(a+b)S_3 + (a-b)S_4 \sim NHGSP\Big(\{\pm a,\pm b, \pm(a+b), \pm(a-b)\},\\
&\ \ \  (\lambda_1\delta_2,\mu_1\delta_2, \delta_1\lambda_2,\delta_1\mu_2, \lambda_1\lambda_2, \mu_1\mu_2, \lambda_1\mu_2+\eta_{21}, \mu_1\lambda_2+\eta_{12}) \Big).
\end{align*}
where we use the notation introduced in Definition 2.1 of \cite{CO2025}. Interesting examples are $N_1+N_2 = S_1+S_2+2S_3$ and $N_1-N_2 = S_1-S_2+2S_4,$ which are non-homogeneous Skellam processes of order $2$, see \cite{CO2025}.
\hfill$\diamond$
\end{remark}

\begin{proposition}\label{proposizioneSkellamInterazioneOmogeneoPoissonComposto}
Let $(N_1,N_2)$ be a homogeneous version of the vector process described in \eqref{definizioneInfinitesimaleCoppiaTipoSkellam}. Let us denote by $\bar\lambda = \lambda_1\lambda_2 + \mu_1\mu_2 +\lambda_1\mu_2 + \eta_{21} + \mu_1\lambda_2 +\eta_{12} + \lambda_1\delta_2 + \delta_1\lambda_2 + \mu_1\delta_2 + \delta_1\mu_2$. Then,
\begin{equation}\label{coppiaSkellamPoissonComposto}
\big(N_1(t),N_2(t)\big) \stackrel{d}{=} \sum_{k = 1}^{N(t)} (X_k,Y_k),\ \ \ t\ge0,
\end{equation}
where $N$ is a homogeneous Poisson process with rate $\bar\lambda$ and $(X_k,Y_k)$ are independent copies of
\begin{align}\label{definizioneComponentiPoissonCompostoSkellam}
(X,Y) = \begin{cases}
\begin{array}{l l}
(1,1), & \ \lambda_1\lambda_2/\bar\lambda,\\
(-1,-1), &\  \mu_1\mu_2/\bar\lambda,\\
(1,-1), & \ (\lambda_1\mu_2+\eta_{21})/\bar\lambda,\\
(-1,1), & \ (\mu_1\lambda_2+\eta_{12})/\bar\lambda,\\
(1,0), &\ \lambda_1\delta_2/\bar\lambda,\\
(0,1), & \ \delta_1\lambda_2/\bar\lambda,\\
(-1, 0), & \ \mu_1\delta_2/\bar\lambda,\\
(0, -1), & \ \delta_1\mu_2/\bar\lambda.\\
\end{array}
\end{cases} 
\end{align}
\end{proposition}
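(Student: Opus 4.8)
The plan is to establish the identity \eqref{coppiaSkellamPoissonComposto} at each fixed $t$ by comparing joint probability generating functions, exactly as in the proof of Theorem \ref{teoremaCoppiaSkellamTerminiSkellam}. Writing $\varphi(u,v) = \mathbb{E}\,u^{X}v^{Y}$ for the pgf of a single jump \eqref{definizioneComponentiPoissonCompostoSkellam}, I would first condition on the value of the homogeneous Poisson process $N(t)$ and use the independence of the summands $(X_k,Y_k)$ to obtain
\begin{equation*}
\mathbb{E}\,u^{\sum_{k=1}^{N(t)} X_k}\,v^{\sum_{k=1}^{N(t)} Y_k} = \mathbb{E}\,\varphi(u,v)^{N(t)} = \exp\!\bigl(-\bar\lambda t\,(1-\varphi(u,v))\bigr),
\end{equation*}
where the last equality is the pgf of a Poisson random variable with mean $\bar\lambda t$.

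Next I would compute $\varphi$ directly from the jump law \eqref{definizioneComponentiPoissonCompostoSkellam}. Since each probability carries a factor $1/\bar\lambda$, the product $\bar\lambda\,\varphi(u,v)$ telescopes into
\begin{equation*}
\lambda_1\lambda_2\,uv + \frac{\mu_1\mu_2}{uv} + (\lambda_1\mu_2+\eta_{21})\frac{u}{v} + (\mu_1\lambda_2+\eta_{12})\frac{v}{u} + \lambda_1\delta_2\,u + \delta_1\lambda_2\,v + \frac{\mu_1\delta_2}{u} + \frac{\delta_1\mu_2}{v},
\end{equation*}
which is precisely the bracketed bilinear expression appearing in the homogeneous specialization of \eqref{funzioneGeneratriceCoppiaTipoSkellam}. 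Consequently $\bar\lambda\,(1-\varphi(u,v)) = R(u,v)$, the time-independent integrand $R$ of \eqref{funzioneGeneratriceCoppiaTipoSkellam} when all rate functions are constant.

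Substituting back, the pgf of the compound vector equals $\exp(-t\,R(u,v))$, which coincides with $G(t,u,v)$ of \eqref{funzioneGeneratriceCoppiaTipoSkellam} evaluated in the homogeneous case with $n_1=n_2=0$ (the compound Poisson vector starts at the origin, so the statement is understood for the version of $(N_1,N_2)$ issuing from $(0,0)$, equivalently for the shifted vector $(N_1-n_1,N_2-n_2)$). Matching pgfs at each $t$ gives the asserted equality in distribution. The only real care needed is bookkeeping: verifying that the eight jump probabilities sum to one, so that $\bar\lambda$ is indeed the total rate and $N$ is a genuine Poisson process, and that the $1/\bar\lambda$ normalisation cancels cleanly against the rate $\bar\lambda$ of $N(t)$. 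Alternatively, one could bypass the explicit pgf computation and simply observe that in the homogeneous regime the infinitesimal description \eqref{definizioneInfinitesimaleCoppiaTipoSkellam} exhibits $(N_1,N_2)$ as a spatially homogeneous pure-jump Markov process on $\mathbb{Z}^2$ with constant jump intensities supported on the eight displacement vectors of \eqref{definizioneComponentiPoissonCompostoSkellam}, which is by definition a compound Poisson (Lévy) process with total intensity $\bar\lambda$ and jump law \eqref{definizioneComponentiPoissonCompostoSkellam}.
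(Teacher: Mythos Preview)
Your proof is correct and follows essentially the same approach as the paper: both compute the joint probability generating function of the compound Poisson vector as $\exp\bigl(-\bar\lambda t(1-\mathbb{E}u^Xv^Y)\bigr)$ and then identify it with \eqref{funzioneGeneratriceCoppiaTipoSkellam} by inserting the pgf of $(X,Y)$ from \eqref{definizioneComponentiPoissonCompostoSkellam}. Your additional remarks on the initial condition $(n_1,n_2)=(0,0)$ and the alternative L\'evy-process viewpoint are welcome clarifications but not required for the argument.
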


\begin{proof}
It is straightforward to show that the probabity generating function of the right-hand side of \eqref{coppiaSkellamPoissonComposto} coincides with \eqref{funzioneGeneratriceCoppiaTipoSkellam}. Indeed, the probability generating function of a vector compound Poisson reads
\begin{align}
\mathbb{E} u^{\sum_{k=1}^{N(t)} X_k} v^{\sum_{k=1}^{N(t)} Y_k} = \text{exp}\Bigl(-\bar\lambda t\big(1-\mathbb{E}u^Xv^Y\big)\Big),
\end{align}
and the result follows by inserting the probability generating function of $(X,Y)$, which emerges from \eqref{definizioneComponentiPoissonCompostoSkellam}. 
\end{proof}

\subsection{Generalizations}

\subsubsection{Multiple interacting processes}\label{sottosezioneCasoGeneraleSkellamMultidimensionale}

We consider a three-dimensional version of the model defined in \eqref{definizioneInfinitesimaleCoppiaTipoSkellam}, that is a model with three different stochastic groups $(N_1,N_2,N_3)$. According to the model above, we restrict ourselves under the hypothesis that in each group it can occur just one "event" per instant of time, meaning that a group cannot, at the same instant of time, either take part into two migrations (for instance $N_1$ cannot give to $N_2$ and simultaneously receive from $N_3$) or add/lose an element and receive/give another element because of migration. This is a sufficient, but not necessary, condition to have groups which may increase or decrease of maximum one unit per instant of time. Under this hypothesis, proceeding as shown in the case above, the marginal components are dependent non-homogeneous (classical) Skellam processes which can be decomposed into a linear combination of some other (classical) Skellam processes representing the different possible outcomes occurring at each instant of time:
\begin{align}
&N_1 \stackrel{d}{=} S_1+ S_{1,2} + S_{1,-2} + S_{1,3} + S_{1,-3} + S_{1,2,3} + S_{1,-2,3} + S_{1,2,-3}+ S_{1,-2,-3}\nonumber\\
&N_2 \stackrel{d}{=}  S_2 + S_{1,2} - S_{1,-2} + S_{2,3}  + S_{2,-3} + S_{1,2,3} - S_{1,-2,3} + S_{1,2,-3} - S_{1,-2,-3}\nonumber\\
&N_3 \stackrel{d}{=} S_3 + S_{1,3} - S_{1,-3} + S_{2,3} - S_{2,-3} + S_{1,2,3} + S_{1,-2,3} - S_{1,2,-3} -S_{1,-2,-3},\label{decomposizioneSkellamInterazionePiuGruppi}
\end{align}
 with $S_1\sim NHSP(\lambda_1\delta_2\delta_3,\mu_1\delta_2\delta_3),$ $S_{1,2}\sim NHSP(\lambda_1\lambda_2\delta_3,\mu_1\mu_2\delta_3),$  $S_{1,-2}\sim NHSP(\lambda_1\mu_2\delta_3 + \eta_{21}\delta_3,\mu_1\lambda_2\delta_3 + \eta_{12}\delta_3), \dots,S_{2,-3} \sim NHSP(\delta_1\lambda_2\mu_3+\delta_1\eta_{32},\delta_1\mu_2\lambda_3 + \delta_1\eta_{23}),$ $ S_{1,2,3}\sim NHSP(\lambda_1\lambda_2\lambda_3,\mu_1\mu_2\mu_3),$ $S_{1,-2,3}\sim NHSP(\lambda_1\mu_2\lambda_3 + \eta_{21}\lambda_3 + \lambda_1\eta_{23}, \mu_1\lambda_2\mu_3 + \eta_{12}\mu_3 + \mu_1\eta_{32}),\dots, S_{1,-2,-3}\sim NHSP(\lambda_1\mu_2\mu_3+\eta_{21}\mu_3 +\eta_{31}\mu_2, \mu_1\lambda_2\lambda_3+\eta_{12}\lambda_3+\eta_{13}\lambda_2)$. The reader can recognize the role of each term analyzing the rate functions (also consider the subscripts of the components, for instance, $S_{1,-2}$ concerns the events when $N_1$ increases and $N_2$ decreases).

The interested reader can observe that the decomposition \eqref{decomposizioneSkellamInterazionePiuGruppi} yields suitable versions of the remarks derived as consequences of the Theorem \ref{teoremaCoppiaSkellamTerminiSkellam}. We note that the linear combinations $aN_1+bN_2+cN_3$, with a $a,b,c\in\{\pm1\}$, are Skellam processes of order $3$.
\\

In general, a model with $n$ interacting groups, under the hypothesis that for each group can occur just one "event" (as above), can be expressed in terms of the combination of $(3^n-1)/2$ independent non-homogeneous (classical) Skellam processes, still having dependent marginal components of Skellam-type.

\subsubsection{Bidimensional generalized Skellam-type interacting processes}

We can extend the results presented above to a more general interacting vector process, where the two components can increase, decrease and exchange different amounts of units. Let the following sets describe the possible changes of the two groups,
\begin{align}
&\mathcal{I}_1 =\{k\in\mathbb{Z}\,:\, N_1 \text{ modifies of $k$ units, not depending on $N_2$}\},\nonumber\\
&\mathcal{I}_2 =\{k\in\mathbb{Z}\,:\, N_2 \text{ modifies of $k$ units, not depending on $N_1$}\},\nonumber\\
&\mathcal{N}_1 =\{k\in\mathbb{N}\,:\, N_1 \text{ forwards $k$ units to $N_2$}\},\nonumber\\
&\mathcal{N}_2 =\{k\in\mathbb{N}\,:\, N_2 \text{  forwards $k$ units to $N_1$}\}, \label{insiemiMidificheStatoVettoreInterazioneSkellamGeneralizzato}
\end{align}
with $|\mathcal{I}_1|, |\mathcal{I}_2|, |\mathcal{N}_1|, |\mathcal{N}_2|<\infty$. Also assume the integrable rate functions $\lambda^1_i,\lambda_j^2>0$ for $i\in\mathcal{I}_1, j\in \mathcal{I}_2$ and equal to $0$ otherwise, $\eta^{12}_i,\eta_j^{21}>0$ for $i\in\mathcal{N}_1, j\in\mathcal{N}_2$ and equal to $0$ otherwise, and $\delta^1,\delta^2>0$.

Then, we define a stochastic vector process $(N_1,N_2)$ such that $\big(N_1(0),N_2(0)\big) = (n_1,n_2)\in\mathbb{Z}^2$ a.s., with independent increments and having the following infinitesimal behavior $t\ge0,\ h,k\in\mathbb{Z}$
\begin{align}\label{definizioneInfinitesimaleCoppiaTipoSkellamGeneralizzato}
P\{N_1&(t+\dif t) = h+i,\, N_2(t+\dif t) = k+j\,|\,N_1(t) = h,\, N_2(t) = k\}  \\
& = \begin{cases}
\begin{array}{l l}
\lambda_i^1(t)\lambda_j^2(t)\dif t + o(\dif t), & i\not=-j,\ (i,j)\in\mathcal{I}_1\times \mathcal{I}_2 \\ & \text{ or } i=-j,\ |i|\not \in\mathcal{N}_1,|j|\not\in\mathcal{N}_2,\\
\lambda_{i}^1(t)\lambda_{-i}^2(t) \dif t + \eta_{|i|}^{12}(t)\dif t + o(\dif t), & j=-i\in\mathcal{N}_1\ (i<0),\\
\lambda_{-j}^1(t)\lambda_{j}^2(t) \dif t + \eta_{|j|}^{21}(t)\dif t + o(\dif t), & i=-j\in\mathcal{N}_2\ (j<0),\\
\lambda_i^1(t)\delta^2(t)\dif t + o(\dif t), & i\in\mathcal{I}_1,j=0,\\
\delta^1(t)\lambda_j^2(t)\dif t + o(\dif t), & i=0,j\in\mathcal{I}_2,\\
1-\Big( \sum_{i\in\mathcal{I}_1,j\in\mathcal{I}_2} \lambda_i^1(t)\lambda_j^2(t) \\
\ \ \ \ \ +\sum_{i\in\mathcal{I}_1} \lambda_i^1(t)\delta^2(t)+\sum_{j\in\mathcal{I}_2} \delta_i^1(t)\lambda_j^2(t)\\
\ \ \ \ \ + \sum_{i\in\mathcal{N}_1} \eta_{i}^{12}(t) + \sum_{j\in\mathcal{N}_2} \eta_{j}^{21}(t)\Big) \dif t + o(\dif t), & i,j=0, \\
o(\dif t), & \text{otherwise}.
\end{array}
\end{cases} \nonumber
\end{align}
where in the first case each component changes but it is certain that there is no migration; the second case describes when the change can occur because of the migration of units from $N_1$ to $N_2$; the third case describes when the change can occur because of the migration of units from $N_2$ to $N_1$; the fourth and fifth cases describe when just one group changes.

Note that the restriction of having jumps of integer size can be relaxed by modifying the sets in \eqref{insiemiMidificheStatoVettoreInterazioneSkellamGeneralizzato} and the domain of the variables $h,k$ in \eqref{definizioneInfinitesimaleCoppiaTipoSkellamGeneralizzato}.

\begin{theorem}\label{teoremaProcessoInterazioneSkellamGenerale}
Let $(N_1,N_2)$ be the stochastic vector process defined in \eqref{definizioneInfinitesimaleCoppiaTipoSkellamGeneralizzato}. Then, the marginal components are non-homogeneous generalized Skellam processes given by
\begin{align}
&N_1 = \sum_{h\in\mathcal{I}_1} h N_h^{1,\mathcal{I}} + \sum_{h\in\mathcal{I}_1} h \sum_{k\in\mathcal{I}_2} N_{h,k}^{\mathcal{I}} +\sum_{h\in\mathcal{N}_1} (-h) N_h^{1,\mathcal{N}} + \sum_{k\in\mathcal{N}_2} k N_k^{2,\mathcal{N}},\label{rappresentazioneInterazioneSkellamGeneralizzatoProcessiPoisson}\\
&N_2 = \sum_{k\in\mathcal{I}_2} k N_k^{2,\mathcal{I}} + \sum_{k\in\mathcal{I}_2} k \sum_{h\in\mathcal{I}_1} N_{h,k}^{\mathcal{I}} +\sum_{h\in\mathcal{N}_1} h N_h^{1,\mathcal{N}} + \sum_{k\in\mathcal{N}_2} (-k) N_k^{2,\mathcal{N}}, \label{rappresentazioneInterazioneSkellamGeneralizzatoProcessiPoissonDue}
\end{align}
where
\begin{align}
&N_h^{1,\mathcal{I}} \sim PP(\lambda^1_h\delta^2),\ h\in\mathcal{I}_1,\ \ \ N_k^{2,\mathcal{I}} \sim PP(\delta^1\lambda^2_k),\ k\in\mathcal{I}_2, \ \ \ N_{h,k}^{\mathcal{I}}\sim PP(\lambda_h^1\lambda_k^2),\ (h,k)\in\mathcal{I}_1\times\mathcal{I}_2,\nonumber\\
&N_h^{1,\mathcal{N}}\sim PP(\eta_h^{12}),\ h\in\mathcal{N}_1,\ \ \ N_k^{2,\mathcal{N}}\sim PP(\eta_k^{21}),\ k\in\mathcal{N}_2,
\end{align}
are independent non-homogeneous Poisson processes (PP).
\end{theorem}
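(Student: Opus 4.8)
The plan is to follow exactly the template that produced the joint generating function \eqref{funzioneGeneratriceCoppiaTipoSkellam} for the basic model, and then to identify the outcome with the generating function of the proposed Poisson decomposition. First I would write the forward Kolmogorov difference-differential equation for $p_{h,k}(t) = P\{N_1(t)=h,\,N_2(t)=k\}$ directly from the infinitesimal transition probabilities in \eqref{definizioneInfinitesimaleCoppiaTipoSkellamGeneralizzato}, reading off each admissible jump vector $(i,j)$ together with its rate $r_{i,j}(t)$ from the cases of that table. Multiplying by $u^h v^k$, summing over $h,k\in\mathbb{Z}$ and using $\sum_{h,k} u^h v^k p_{h+i,k+j}(t) = u^{-i}v^{-j}G(t,u,v)$ as in the passage preceding \eqref{funzioneGeneratriceCoppiaTipoSkellam}, I obtain the first-order linear equation in $t$ (with $u,v$ as parameters)
$$\frac{\partial G}{\partial t}(t,u,v) = -R(t,u,v)\,G(t,u,v), \qquad R(t,u,v) = \sum_{(i,j)} r_{i,j}(t)\bigl(1 - u^i v^j\bigr),$$
whose solution under the initial condition $G(0,u,v)=u^{n_1}v^{n_2}$ is $G(t,u,v) = u^{n_1}v^{n_2}\exp\bigl(-\int_0^t R(s,u,v)\dif s\bigr)$.

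Next I would compute the joint generating function of the right-hand sides of \eqref{rappresentazioneInterazioneSkellamGeneralizzatoProcessiPoisson}--\eqref{rappresentazioneInterazioneSkellamGeneralizzatoProcessiPoissonDue}. Since all the listed processes are independent, the joint generating function factorises, and recalling that $\mathbb{E} z^{N(t)}=e^{-\Lambda(t)(1-z)}$ for a non-homogeneous Poisson process with cumulative rate $\Lambda$, each factor is dictated by the jump vector that its process induces on the pair $(N_1,N_2)$: $N_h^{1,\mathcal{I}}$ contributes the monomial $u^h$, $N_k^{2,\mathcal{I}}$ the monomial $v^k$, $N_{h,k}^{\mathcal{I}}$ the monomial $u^h v^k$, the migration process $N_h^{1,\mathcal{N}}$ the monomial $u^{-h}v^{h}$, and $N_k^{2,\mathcal{N}}$ the monomial $u^{k}v^{-k}$. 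Collecting the factors yields once more an expression of the form $u^{n_1}v^{n_2}\exp(-\int_0^t(\cdots)\dif s)$ whose exponent is a sum, over jump vectors, of $(1-u^iv^j)$ weighted by the corresponding Poisson rate.

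The heart of the argument, and the step I expect to demand the most care, is matching the two exponents monomial by monomial. For changes off the anti-diagonal ($j\neq -i$) and for single-group changes the correspondence is immediate. The delicate case is $j=-i$: the family $\{N_{h,k}^{\mathcal{I}}\}$ ranges over all of $\mathcal{I}_1\times\mathcal{I}_2$ and therefore already includes the pairs $(i,-i)$, contributing $\lambda_i^1\lambda_{-i}^2\,(1-u^i v^{-i})$, while a migration process produces exactly the same monomial $u^i v^{-i}$ (for $i<0$, a migration of $|i|$ units from $N_1$ to $N_2$ via $N_{|i|}^{1,\mathcal{N}}$; for $i>0$, a migration from $N_2$ to $N_1$ via $N_{i}^{2,\mathcal{N}}$) and contributes the corresponding term in $\eta$. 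I would verify that the superposition of these independent Poisson contributions reproduces precisely the combined rates $\lambda_i^1\lambda_{-i}^2+\eta_{|i|}^{12}$ and $\lambda_{-j}^1\lambda_j^2+\eta_{j}^{21}$ of the second and third cases of \eqref{definizioneInfinitesimaleCoppiaTipoSkellamGeneralizzato}, with no double counting and no omission, using the convention that $\lambda_i^1=0$ whenever $i\notin\mathcal{I}_1$ (and similarly for the other rate functions). Once this identification is checked, the two exponents coincide, so $R$ agrees with the exponent of the decomposition and the generating functions are equal. Finally, since both the vector defined in \eqref{definizioneInfinitesimaleCoppiaTipoSkellamGeneralizzato} and the Poisson construction have independent increments and the same initial value, equality of the one-time generating functions together with the increment generating functions obtained as ratios, exactly as in \eqref{funzioneGeneratriceIncrementiCoppiaTipoSkellam}, yields equality in distribution of the processes, completing the proof.
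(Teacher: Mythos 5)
Your proposal is correct and follows essentially the same route as the paper, whose proof is only a one-line instruction to adapt the derivation of \eqref{funzioneGeneratriceCoppiaTipoSkellam} and the generating-function comparison of Theorem \ref{teoremaCoppiaSkellamTerminiSkellam}; you simply carry out that adaptation explicitly, including the correct handling of the anti-diagonal jumps $j=-i$ where the $\lambda_i^1\lambda_{-i}^2$ and $\eta$ contributions superpose. No gap to report.
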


\begin{proof}
The proof follows by suitably adapting the arguments used to derive the probability generating function \eqref{funzioneGeneratriceCoppiaTipoSkellam} and Theorem \ref{teoremaCoppiaSkellamTerminiSkellam}.
\end{proof}

\begin{remark}[Interacting Skellam processes of order $K$]
If we assume that the components of the vector process can increase/decrease or exchange up to $K\in\mathbb{N}$ units, then $\mathcal{I}_1 = \mathcal{I}_2 = \{-K,\dots,-1,1,\dots,K\}$ and $\mathcal{N}_1 = \mathcal{N}_2 = \{1,\dots, K\}$. Therefore, the representation \eqref{rappresentazioneInterazioneSkellamGeneralizzatoProcessiPoisson} reduces to
\begin{align*}
	&N_1 = \sum_{\substack{h=-K\\h\not = 0}}^K h N_h^1 + \sum_{\substack{h=-K\\h\not = 0}}^K h \sum_{\substack{k=-K\\h\not = 0}}^K  N_{h,k} - \sum_{h=1}^K h N_h^{1,\mathcal{N}} +\sum_{k=1}^{K} k N_k^{2,\mathcal{N}}  = S_1^{K} + \sum_{\substack{h,k=-K\\h,k\not = 0}}^K h N_{h,k} + S^{\mathcal{N}},\nonumber\\
\end{align*}
and similarly, \eqref{rappresentazioneInterazioneSkellamGeneralizzatoProcessiPoissonDue} turns into
\begin{equation*}
	N_2 = S_2^K + \sum_{\substack{h,k=-K\\h,k\not = 0}}^K k N_{h,k} - S^{\mathcal{N}},
\end{equation*}
where $S^K_{1} = \sum_{h=1}^K h\big(N_h^{1,\mathcal{I}}- N_{-h}^{1,\mathcal{I}}\big),\  S_2^K = \sum_{h=1}^K h\big(N_h^{2,\mathcal{I}}- N_{-h}^{2,\mathcal{I}}\big),\ S^{\mathcal{N}} = \sum_{h=1}^K h\big(N_h^{2,\mathcal{N}} - N_h^{1,\mathcal{N}} \big) $ are independent non-homogeneous Skellam processes of order $K$, see \cite{CO2025} and the references therein.
\hfill $\diamond$
\end{remark}

\section{Interacting birth-death processes}\label{sezioneProcessiNascitaMorteMigrazione}

Let $(N_1,N_2) = \bigl\{\big (N_1(t),N_2(t)\big)\big\}_{t\ge0}$ be a stochastic vector process describing the evolution of two interacting birth-death processes (which we also call \textit{birth-death-migration vector process}). Let us assume the vector has independent increments, $\big (N_1(0),N_2(0)\big) = (n_1,n_2)\in \mathbb{N}^2$ a.s. and the following infinitesimal transition probability mass function, with $h,k\in\mathbb{N}_0$,
\begin{align}\label{definizioneInfinitesimaleProcessoNascitaMorteInterazione}
P\{&N_1(t+\dif t)= h+i,\, N_2(t+\dif t) = k+j\,|\,N_1(t) = h,\, N_2(t) = k\}  \\
& = \begin{cases}
\begin{array}{l l}
\lambda_1(t)h \dif t + o(\dif t), & i=1,j=0,\\
\lambda_2(t)k\dif t + o(\dif t), & i=0,j=1,\\
\mu_1(t)h\dif t + o(\dif t), & i=-1,j=0,\\
\mu_2(t)k\dif t + o(\dif t), & i=0,j=-1,\\
\eta_1(t)h \dif t + o(\dif t), & i=-1,j=1,\\
\eta_{2}(t)k\dif t + o(\dif t), & i=1,j=-1,\\
1-\big(\lambda_1(t)h+\lambda_2(t)k+\mu_1(t)h+\mu_2(t)k + \eta_{1}(t)h + \eta_{2}(t)k \big) \dif t + o(\dif t), & i,j=0, \\
o(\dif t), & \text{otherwise},
\end{array}
\end{cases} \nonumber
\end{align}
where $\lambda_1,\lambda_2,\mu_1,\mu_2,\eta_1,\eta_2$ are integrable functions.
\\

From the definition \eqref{definizioneInfinitesimaleProcessoNascitaMorteInterazione}, by means of classical arguments we derive the state equation. By denoting with $p_{h,k}(t) = P\{N_1(t)= h, N_2(t) = k\}$ (i.e. $p_{h,k}(t) = 0$ if $h,k\not\in\mathbb{N}_0^2$), $t\ge0$ (we omit the explicit dependence of the time for the rate functions)
\begin{align}
\frac{\dif p_{h,k}(t)}{\dif t} &= -\Bigl((\lambda_1+\mu_1+\eta_1) h+(\lambda_2+\mu_2+\eta_2)k\Big)p_{h,k}(t) \nonumber\\
&\ \ \ + \lambda_1 (h-1)p_{h-1,k}(t) + \lambda_2 (k-1)p_{h, k-1}(t)\nonumber\\
&\ \ \ + \mu_1 (h+1)p_{h+1,k}(t) + \mu_2 (k+1)p_{h, k+1}(t) \nonumber\\
&\ \ \ + \eta_1 (h+1)p_{h+1,k-1}(t) + \eta_2(k+1)p_{h-1, k+1}(t),\ \ \  h,k\ge 0, \label{equazioneStatoProcessoNascitaMorteInterazione}
\end{align}
with initial conditions $p_{h,k}(0) = 1$ if $(h,k) = (n_1,n_2)$ and $0$ otherwise.

Note that for a non-linear process, formula \eqref{definizioneInfinitesimaleProcessoNascitaMorteInterazione} and equation \eqref{equazioneStatoProcessoNascitaMorteInterazione} suitably modify. Furthermore, one may imagine several types of interaction between the two populations, also with migration rates depending on the number of individuals in both groups (this kind of hypotheses may raise several mathematical complications).
\\

From the state equation \eqref{equazioneStatoProcessoNascitaMorteInterazione} we derive the differential equation for the joint probability generating function $G(t,u,v) = \sum_{h=0}^{n_1}\sum_{k=0}^{n_2} u^h v^k p_{h,k}(t)$, with $u,v$ in the neighborhood of $0$,
\begin{align}\label{equazionDifferenzialeGeneratriceProbabilitaNascitaMorteMigrazione}
&\frac{\partial G}{\partial t} = \Bigl(\lambda_1 u^2 - (\lambda_1 + \mu_1+\eta_1) u+\mu_1+\eta_1v\Big)\frac{\partial G}{\partial u} + \Bigl(\lambda_2 v^2 - (\lambda_2 + \mu_2+\eta_2) v+\mu_2+\eta_2u\Big)\frac{\partial G}{\partial v},
\end{align}
with initial condition $G(0, u,v) = u^{n_1}v^{n_2}$.
\\

Hereafter we restrict ourselves to the homogeneous case, where the rate functions are constant.

\begin{proposition}\label{proposizioneValoreAttesoNascitaMorteInterazione}
Let $(N_1,N_2)$ be the (homogeneous) birth-death-migration vector process defined in \eqref{definizioneInfinitesimaleProcessoNascitaMorteInterazione}. Then, for $t\ge0$,
\begin{align}
&\mathbb{E}N_1(t)=\frac{n_1e^{-Lt/2}}{2}\bigl(e^{Rt/2} + e^{-Rt/2}\bigr) +  \frac{e^{-Lt/2}}{R}\Bigl(n_2\eta_2 - \frac{n_1M}{2}\Bigr)\bigl(e^{Rt/2} - e^{-Rt/2}\bigr), \label{momentoPrimoComponenteNascitaMorteMigrazione}
\end{align}
where $L = -\lambda_1 +\mu_1+\eta_1-\lambda_2+\mu_2+\eta_2, \ M = (-\lambda_1 + \mu_1+\eta_1) -(-\lambda_2+\mu_2+\eta_2)$ and $R =\sqrt{M^2 + 4\eta_1\eta_2}$. Similarly for $N_2(t)$ (it sufficies to switch the subscripts, also in $M$).
\end{proposition}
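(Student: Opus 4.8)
The plan is to extract the first moments directly from the PDE \eqref{equazionDifferenzialeGeneratriceProbabilitaNascitaMorteMigrazione} for the joint generating function, converting it into a closed linear ODE system. I would write the PDE as $\partial_t G = A(u,v)\,\partial_u G + B(u,v)\,\partial_v G$, where $A(u,v) = \lambda_1 u^2 - (\lambda_1+\mu_1+\eta_1)u + \mu_1 + \eta_1 v$ and $B(u,v) = \lambda_2 v^2 - (\lambda_2+\mu_2+\eta_2)v + \mu_2 + \eta_2 u$. The key structural observation is that $A(1,1) = B(1,1) = 0$, which reflects the conservation of total probability and will be what makes the moment equations close.

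Setting $m_1(t) = \partial_u G(t,u,v)\big|_{u=v=1} = \mathbb{E}N_1(t)$ and $m_2(t) = \partial_v G(t,u,v)\big|_{u=v=1} = \mathbb{E}N_2(t)$ (differentiation under the generating function being justified by the finiteness of the moments of a linear birth-death system on compact time intervals), I would differentiate the PDE once with respect to $u$ and once with respect to $v$, then evaluate at $u=v=1$. Because $A$ and $B$ vanish there, every second-order term in $G$ drops out, leaving the closed system
\[
\frac{d}{dt}\begin{pmatrix} m_1 \\ m_2 \end{pmatrix} = \begin{pmatrix} \lambda_1-\mu_1-\eta_1 & \eta_2 \\ \eta_1 & \lambda_2-\mu_2-\eta_2 \end{pmatrix}\begin{pmatrix} m_1 \\ m_2 \end{pmatrix},
\]
with $m_1(0)=n_1$, $m_2(0)=n_2$ read off from $G(0,u,v)=u^{n_1}v^{n_2}$.

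I would then solve this constant-coefficient system by spectral analysis. Writing $a = \lambda_1-\mu_1-\eta_1$ and $d=\lambda_2-\mu_2-\eta_2$, the trace is $a+d = -L$ and $(a-d)^2+4\eta_1\eta_2 = M^2 + 4\eta_1\eta_2 = R^2$, so the eigenvalues are $\rho_\pm = (-L \pm R)/2$. Hence $m_1(t) = e^{-Lt/2}\bigl(C_+ e^{Rt/2} + C_- e^{-Rt/2}\bigr)$, with the constants fixed by $m_1(0)=n_1$ and $m_1'(0)=a\,n_1 + \eta_2 n_2$. Solving this two-by-two linear system and using the identity $a + L/2 = -M/2$ yields $C_\pm = \tfrac{n_1}{2} \pm R^{-1}\bigl(n_2\eta_2 - \tfrac{n_1 M}{2}\bigr)$; regrouping into the symmetric and antisymmetric combinations of $e^{\pm Rt/2}$ then reproduces exactly \eqref{momentoPrimoComponenteNascitaMorteMigrazione}.

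I expect the only real obstacle to be the bookkeeping in this last step: matching the solution of the linear system to the hyperbolic form of the proposition requires recognizing $e^{Rt/2}+e^{-Rt/2}$ and $e^{Rt/2}-e^{-Rt/2}$ as the natural groupings and correctly tracking the sign of $M$ and the combination $n_2\eta_2 - \tfrac{n_1 M}{2}$. The derivation of the ODE system is conceptually clean once the vanishing of $A,B$ at $(1,1)$ is noted, and the formula for $N_2(t)$ follows by the stated symmetry under the interchange of subscripts $1\leftrightarrow 2$, which flips the sign of $M$ while leaving $L$ and $R$ unchanged.
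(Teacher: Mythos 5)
Your proposal is correct and follows exactly the paper's route: differentiate the generating-function PDE \eqref{equazionDifferenzialeGeneratriceProbabilitaNascitaMorteMigrazione} in $u$ and in $v$, set $u=v=1$ to obtain the closed linear system \eqref{sistemaDifferenzialeValoreAttesoNascitaMorteInterazione}, and solve it with the initial data $(n_1,n_2)$. Your spectral computation (eigenvalues $(-L\pm R)/2$, the identity $a+L/2=-M/2$, and the resulting constants $C_\pm$) correctly reproduces \eqref{momentoPrimoComponenteNascitaMorteMigrazione}, and in fact spells out the step the paper leaves as ``classical methods.''
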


\begin{proof}
We derive \eqref{equazionDifferenzialeGeneratriceProbabilitaNascitaMorteMigrazione} with respect to $u$ and set $u=v=1$. Then we repeat with respect to $v$. With some calculation, this operation leads to the following linear first-order differential system
\begin{equation}\label{sistemaDifferenzialeValoreAttesoNascitaMorteInterazione}
\begin{cases}
	\frac{\dif  }{\dif  t}\mathbb{E} N_1(t) = (\lambda_1-\mu_1-\eta_1)\mathbb{E}N_1(t) + \eta_2\mathbb{E}N_2(t),\\
	\frac{\dif  }{\dif  t} \mathbb{E} N_2(t) = (\lambda_2-\mu_2-\eta_2)\mathbb{E}N_2(t) + \eta_1\mathbb{E}N_1(t),
\end{cases}
\end{equation}
with the initial condition $\mathbb{E}N_1(0) = n_1$ and $\mathbb{E}N_2(0) = n_2$. By solving \eqref{sistemaDifferenzialeValoreAttesoNascitaMorteInterazione} with classical methods we obtain the stated result.
\end{proof}

The interested reader can check that if $\eta_1 = \eta_2 =0 $ the moment \eqref{momentoPrimoComponenteNascitaMorteMigrazione} is equal to $e^{(\lambda_1-\mu_1)t}$ since $N_1$ reduces to a classical birth-death process.

\begin{remark}[Second-order moments]
Note that by deriving \eqref{equazionDifferenzialeGeneratriceProbabilitaNascitaMorteMigrazione} with respect to both $u$ and $v$, by deriving it twice with respect to $u$ and by deriving twice with respect to $v$ and always setting $u=v=1$, we obtain a non-homogeneous linear differential system concerning the second order moments $\sigma(t) = \mathbb{E}N_1(t)N_2(t), \sigma_1(t) = \mathbb{E}N_1(t)(N_1(t)-1)$ and $\sigma_2(t) = \mathbb{E}N_2(t)(N_2(t)-1),\  t\ge0,$
 reading
\begin{equation}\label{sistemaDifferenzialeMomentiSecondiNascitaMorteInterazione}
	\begin{cases}
		\sigma'(t) = (\lambda_1-\mu_1-\eta_1+\lambda_2-\mu_2-\eta_2)\sigma(t)+\eta_1\sigma_1(t)+\eta_2\sigma_2(t),\\
		\sigma_1'(t)= 2(\lambda_1-\mu_1-\eta_1)\sigma_1(t) +2\eta_2\sigma(t) +2\lambda_1 \mathbb{E}N_1(t),\\
		\sigma_2'(t)= 2(\lambda_2-\mu_2-\eta_2)\sigma_2(t) +2\eta_1\sigma(t) +2\lambda_2 \mathbb{E}N_2(t).
	\end{cases}
\end{equation}
By means of some calculation, system \eqref{sistemaDifferenzialeMomentiSecondiNascitaMorteInterazione} yields a non-homogeneous third-order ordinary differential equations for $\sigma_1$ which we have not been able to solve.

On the other hand, by assuming that $\lambda_1-\mu_1 = \lambda_2-\mu_2 = \alpha>0$ and $\eta_1=\eta_2 = \eta>0$, we can solve system \eqref{sistemaDifferenzialeMomentiSecondiNascitaMorteInterazione} since it reduces to
\begin{equation}\label{sistemaDifferenzialeMomentiSecondiNascitaMorteInterazioneRidotto}
	\begin{cases}
		\sigma'(t) = 2(\alpha-\eta)\sigma(t)+\eta\big(\sigma_1(t)+\sigma_2(t)\big), \\
		 \sigma_1'(t)= 2(\alpha-\eta)\sigma_1(t) +2\eta\sigma(t) +2\lambda_1 \mathbb{E}N_1(t),\\
		 \sigma_2'(t)= 2(\alpha-\eta_2)\sigma_2(t) +2\eta\sigma(t) +2\lambda_2 \mathbb{E}N_2(t).
	\end{cases}
\end{equation}
We first derive a non-homogeneous second-order differential equation for the crossed moment $\sigma$ by means of classical computation (that is by deriving the first equation of \eqref{sistemaDifferenzialeMomentiSecondiNascitaMorteInterazioneRidotto}, then using the second and third equations to replace the derivatives of $\sigma_1$ and $\sigma_2$, and finally using again the first equation to express $\sigma_1+\sigma_2$ in terms of $\sigma$),
\begin{equation}\label{equazioneDifferenzialeSecondoOrdineMomentoSecondoRidotto}
 \sigma''(t) + 4(\eta-\alpha) \sigma'(t)+ 4\alpha (\alpha-2\eta)\sigma = 2\eta\big(\lambda_1\mathbb{E}N_1(t) + \lambda_2\mathbb{E}N_2(t)\big).
\end{equation}
Note that under the given hypotheses, $L=-2(\alpha-\eta), M=0,  R=2\eta$, thus the moments in Proposition \ref{proposizioneValoreAttesoNascitaMorteInterazione} simplify and we have that
\begin{align}
	2\big(\lambda_1&\mathbb{E}N_1(t) + \lambda_2\mathbb{E}N_2(t) \big) \nonumber\\
	&= 2\lambda_1 \Bigl(\frac{n_1+n_2}{2} e^{\alpha t}+\frac{n_1-n_2}{2}e^{(\alpha-2\eta)t}\Bigr) + 2\lambda_2 \Bigl(\frac{n_1+n_2}{2} e^{\alpha t}+\frac{n_2-n_1}{2}e^{(\alpha-2\eta)t}\Bigr)\nonumber\\
	&= (\lambda_1+\lambda_2)(n_1+n_2)e^{\alpha t} +(\lambda_1-\lambda_2)(n_1-n_2)e^{(\alpha-2\eta)t}\nonumber\\
	& = ae^{\alpha t} +be^{(\alpha-2\eta)t}, \label{combinazioneValoriAttesiPrimiNascitaMorteInterazioneRidotto}
\end{align}
where $a = (\lambda_1+\lambda_2)(n_1+n_2) $ and $b = (\lambda_1-\lambda_2)(n_1-n_2)$.
Inspired by the form of \eqref{combinazioneValoriAttesiPrimiNascitaMorteInterazioneRidotto}, by means of the method of the undetermined coefficients, one obtains that a particular solution to equation \eqref{equazioneDifferenzialeSecondoOrdineMomentoSecondoRidotto} reads
$$ f(t) =  \frac{ a \eta}{\alpha(\alpha-4\eta)}e^{\alpha t} +\frac{b \eta}{\alpha^2-4\eta^2}e^{(\alpha-2\eta)t}.$$
Then, by solving the homogeneous differential equation associated to \eqref{equazioneDifferenzialeSecondoOrdineMomentoSecondoRidotto} we obtain that the general solution to \ref{equazioneDifferenzialeSecondoOrdineMomentoSecondoRidotto} reads
\begin{equation}
	\sigma(t) = Ae^{2\alpha t} + Be^{2(\alpha-2\eta)t} + f(t).
\end{equation}
Finally, the initial conditions $\sigma(0) = n_1n_2$ and $\sigma'(0) = 2n_1n_2(\alpha-\eta)+\eta\big(n_1(n_1-1)+n_2(n_2-1)\big)$ (obtained by means of the first equation of \eqref{sistemaDifferenzialeMomentiSecondiNascitaMorteInterazioneRidotto} and keeping in mind that $\sigma_1(0) = n_1(n_1-1)$ and $\sigma_2(0) = n_2(n_2-1)$), we obtain the exact coefficients
\begin{align}
	&A=\frac{n_1n_2}{2}-\frac{n_1(n_1-1)+n_2(n_2-1)}{4}+\frac{a}{4\alpha}+\frac{b}{4(\alpha+2\eta)},\nonumber\\
	&B=\frac{n_1n_2}{2}+\frac{n_1(n_1-1)+n_2(n_2-1)}{4}-\frac{a}{4(\alpha-4\eta)}-\frac{b}{4(\alpha-2\eta)}.\label{coefficientiSoluzioneMomentoIncrociatoNascitaMorteInterazione}
\end{align}

Now, the second equation of \eqref{sistemaDifferenzialeMomentiSecondiNascitaMorteInterazioneRidotto} is a non-homogeneous first-order differential equation and its solution follows by means of some calculation (by also keeping in mund the initial condition $\sigma_1(0) = n_1(n_1-1)$),
\begin{align}
	\sigma_1(t) &= -\frac{1}{\alpha-2\eta}\Bigl(\frac{2a\eta^2}{\alpha(\alpha-4\eta)}+\lambda_1(n_1+n_2)\Bigr) e^{\alpha t}+ Ae^{2\alpha t}\nonumber\\
	&\ \ \ -\frac{1}{\alpha}\Bigl(\frac{2b\eta^2}{\alpha^2-4\eta^2}+\lambda_1(n_1-n_2)\Bigr) e^{(\alpha-2\eta) t}- Be^{2(\alpha-2\eta) t}\nonumber\\
	&\ \ \ \biggl[-A + B + \frac{2\eta^2}{\alpha(\alpha-2\eta)}\Big(\frac{a}{\alpha-4\eta}+\frac{b}{\alpha+2\eta}+2\lambda_1\big[n_1(\alpha-\eta) + n_2\eta\bigr] \Big) + n_1(n_1-1)\bigg]e^{2(\alpha-\eta)t}, \label{momentoSecondoMarginaleProcessoNascitaMorteInterazioneRidotto} 
\end{align}
where $a,b,A,B$ are the coefficients appearing in \eqref{combinazioneValoriAttesiPrimiNascitaMorteInterazioneRidotto} and in \eqref{coefficientiSoluzioneMomentoIncrociatoNascitaMorteInterazione}.
\\
Function $\sigma_2$ follows equivalently (it sufficies to switch the subscripts in \eqref{momentoSecondoMarginaleProcessoNascitaMorteInterazioneRidotto}). \hfill$\diamond$
\end{remark}

We point out that the differential systems \eqref{sistemaDifferenzialeValoreAttesoNascitaMorteInterazione} and \eqref{sistemaDifferenzialeMomentiSecondiNascitaMorteInterazione} can be equivalently derived from \eqref{equazionDifferenzialeGeneratriceProbabilitaNascitaMorteMigrazione} by using the joint cumulant generating function of $(N_1(t),N_2(t))$ as shown in Section 4 of \cite{K1948}.
\\

In the next section we provide a detailed study of the death-migration vector process.

\subsection{Death-migration process}

We now assume the process defined in \eqref{definizioneInfinitesimaleProcessoNascitaMorteInterazione} with constant rate functions and $\lambda_1 = \lambda_2 = 0$. Therefore $(N_1,N_2)$ is a linear death-migration process (or death process with interaction).

We begin by deriving the explicit expression of the probability generating function of the interacting death vector process. From the differential equation \eqref{equazionDifferenzialeGeneratriceProbabilitaNascitaMorteMigrazione} we obtain, with $\|(u,v)\| \le1$ and $t\ge0$,
\begin{equation}\label{equazionDifferenzialeGeneratriceProbabilitaMorteMigrazione}
\frac{\partial G}{\partial t} = \Bigl(\mu_1(1-u) + \eta_1(v-u)\Big)\frac{\partial G}{\partial u} + \Bigl(\mu_2(1-v) + \eta_2(u-v)\Big)\frac{\partial G}{\partial v},
\end{equation}
with initial condition $G(0, u,v) = u^{n_1}v^{n_2}$.

By using the change of variable $w = 1-u, z = 1-v$ we obtain
\begin{equation}\label{equazioneDifferenzialeGeneratriceMorteMigrazioneICambioVariabile}
\frac{\partial G}{\partial t} = -\Bigl(\mu_1w + \eta_1(w-z)\Big)\frac{\partial G}{\partial w} - \Bigl(\mu_2 z + \eta_2(z-w)\Big)\frac{\partial G}{\partial z}.
\end{equation}
We now consider a second change of variable $x = aw+bv, y = cw+dz$ with $a,b,c,d$ such that $$\eta_1 a^2 + (\mu_1-\mu_2+\eta_1-\eta_2)ab - \eta_2 b^2 = \eta_1 c^2 + (\mu_1+\eta_1-\mu_2-\eta_2)cd - \eta_2 d^2 = 0.$$ Thus, we consider $b,d\not =0$ and
\begin{align}\label{coefficientiSoluzioneGeneratriceProbabilitaMorteInterazione}
&a = b\,\frac{-(\mu_1-\mu_2+\eta_1-\eta_2) + \sqrt{ (\mu_1-\mu_2+\eta_1-\eta_2)^2+4\eta_1\eta_2}}{2\eta_1},\nonumber\\
&c = - d\,\frac{(\mu_1-\mu_2+\eta_1-\eta_2) + \sqrt{ (\mu_1-\mu_2+\eta_1-\eta_2)^2+4\eta_1\eta_2}}{2\eta_1}. 
\end{align}
This provides the following determinant for the matrix of the coefficients,
\begin{equation}\label{determinanteCambioVariabileDimostrazioneProcessoMorteInterazione}
ad-bc = bd\frac{\sqrt{ (\mu_1-\mu_2+\eta_1-\eta_2)^2+4\eta_1\eta_2}}{\eta_1} \not = 0, 
\end{equation}
and reduces equation \eqref{equazioneDifferenzialeGeneratriceMorteMigrazioneICambioVariabile} into
\begin{align}
\frac{\partial G}{\partial t} &= -\frac{x}{ad-bc}\Big[ad\mu_1 + a(c+d)\eta_1- bc\mu_2 - b(c+d)\eta_2\Big]\frac{\partial G}{\partial x}\nonumber\\
& \ \ \  -  \frac{y}{ad-bc}\Big[-bc\mu_1 - c(a+b)\eta_1 + ad\mu_2 + d(a+b)\eta_2\Big]\frac{\partial G}{\partial y}\nonumber\\
& = -x\Big[\frac{\mu_1+\mu_2+\eta_1+\eta_2}{2}-\frac{\sqrt{(\mu_1-\mu_2+\eta_1-\eta_2)^2+4\eta_1\eta_2}}{2}\Big]\frac{\partial G}{\partial x}\nonumber\\
&\ \ \  -y\Big[\frac{\mu_1+\mu_2+\eta_1+\eta_2}{2}+\frac{\sqrt{(\mu_1-\mu_2+\eta_1-\eta_2)^2+4\eta_1\eta_2}}{2}\Big]\frac{\partial G}{\partial y},\label{equazioneDifferenzialeGeneratriceMorteMigrazioneIICambioVariabile}
\end{align}
where in the last step we replaced $a$ and $c$ with the quantities in \eqref{coefficientiSoluzioneGeneratriceProbabilitaMorteInterazione}, used \eqref{determinanteCambioVariabileDimostrazioneProcessoMorteInterazione} and performed some calculation.
\\
We denote by $L =\mu_1+\mu_2+\eta_1+\eta_2$ and
\begin{equation}\label{notazioneGeneratriceProbabilitaMorteMigrazione}
M = \mu_1-\mu_2+\eta_1 - \eta_2,\ \ \ R =\sqrt{M^2+4\eta_1\eta_2}= \sqrt{(\mu_1-\mu_2+\eta_1-\eta_2)^2+4\eta_1\eta_2}.
\end{equation}
Now, it is easy to check that the function $f(xe^{-(L-R)t/2}, ye^{-(L+R)t/2})$ satisfies equation \eqref{equazioneDifferenzialeGeneratriceMorteMigrazioneIICambioVariabile}. Thus, retuning to the original variables $u,v$ we obtain that
\begin{equation}\label{generatriceProbabilitaMorteMigrazioneFormaConF}
 G(t, u,v) = f\Big(e^{-\frac{L-R}{2}t}\big[a(1-u) + b(1-v)\big], e^{-\frac{L+R}{2}t}\big[c(1-u) + d(1-v)\big]\Big).
\end{equation}
Finally, by considering the initial condition $G(0,u,v) = u^{n_1}v^{n_2}$ we obtain the explicit form for $f$, that is 
\begin{equation}\label{generatriceProbabilitaMorteMigrazioneFormaEsplicitaF}
 f(w,z) = \Big(1-\frac{dw-bz}{ad-bc}\Big)^{n_1}\Big(1-\frac{az-cw}{ad-bc}\Big)^{n_2}.
\end{equation}
Hence, by keeping in mind the coefficients in \eqref{coefficientiSoluzioneGeneratriceProbabilitaMorteInterazione} and the notation \eqref{notazioneGeneratriceProbabilitaMorteMigrazione}, the probability generating function follows by putting together \eqref{generatriceProbabilitaMorteMigrazioneFormaConF} and \eqref{generatriceProbabilitaMorteMigrazioneFormaEsplicitaF},
\begin{align}
&G(t,u,v)\nonumber\\
&= \Biggl(1-\frac{e^{-\frac{\mu_1+\mu_2+\eta_1+\eta_2}{2}t} }{2R}\bigg[  (1-u)\Bigl((R-M)e^{\frac{R}{2}t} + (R+M)e^{-\frac{R}{2}t}\Bigr) +  (1-v)2\eta_1 \Bigl(e^{\frac{R}{2}t} - e^{-\frac{R}{2}t}\Bigr)\bigg]\Biggr)^{n_1}\nonumber\\
& \ \ \ \times \Biggl(1-\frac{e^{-\frac{\mu_1+\mu_2+\eta_1+\eta_2}{2}t} }{2R}\bigg[  (1-u)2\eta_2 \Bigl(e^{\frac{R}{2}t} - e^{-\frac{R}{2}t}\Bigr) + (1-v)\Bigl((R+M)e^{\frac{R}{2}t} + (R-M)e^{-\frac{R}{2}t}\Bigr)  \bigg]\Biggr)^{n_2} \label{generatriceProbabilitaMorteMigrazione} \\
& = \Biggl(1-e^{-\frac{\mu_1+\mu_2+\eta_1+\eta_2}{2}t}\bigg[  (1-u)\Bigl(\cosh(Rt/2) - \frac{M}{R}\sinh(Rt/2)\Bigr) + (1-v)\frac{2\eta_1}{R}\sinh(Rt/2)\bigg]\Biggr)^{n_1} \nonumber\\
& \ \ \ \times \Biggl(1-e^{-\frac{\mu_1+\mu_2+\eta_1+\eta_2}{2}t}\bigg[  (1-u)\frac{2\eta_2}{R}\sinh(Rt/2)+(1-v)\Bigl(\cosh(Rt/2) + \frac{M}{R}\sinh(Rt/2)\bigg)\bigg]\Biggr)^{n_2}. \label{generatriceProbabilitaMorteMigrazioneCosSinIperbolico} 
\end{align}

The above result permits us to derive the following interesting decomposition for the death-migration vector process $(N_1,N_2)$.

\begin{theorem}\label{teoremaMorteMigrazioneSommaMultinomiali}
Let $(N_1,N_2)$ be the death-migration vector process described above. Then, 
\begin{equation}
N_1(t) = X_1(t)+Y_1(t)  \ \ \text{ and } \ \ N_2(t) = X_2(t)+Y_2(t),
\end{equation}
where $X(t) = \bigl(X_1(t), X_2(t)\big)\sim Multinomial\big(n_1, A_1(t), B_1(t)\big)$ and $Y(t) = \bigl(Y_1(t), Y_2(t)\big)\sim Multinomial\big(n_2, A_2(t), B_2(t)\big)$ are independent random variables, with $M,R$ defined in \eqref{notazioneGeneratriceProbabilitaMorteMigrazione} and
\begin{align}
&A_1(t) = \frac{e^{-\frac{\mu_1+\mu_2+\eta_1+\eta_2}{2}t} }{2R}\Bigl((R-M)e^{\frac{R}{2}t} + (R+M)e^{-\frac{R}{2}t}\Bigr) ,\nonumber\\
&B_1(t) = \frac{e^{-\frac{\mu_1+\mu_2+\eta_1+\eta_2}{2}t} }{R}\eta_1 \Bigl(e^{\frac{R}{2}t} - e^{-\frac{R}{2}t}\Bigr),\nonumber\\
&A_2(t) = \frac{e^{-\frac{\mu_1+\mu_2+\eta_1+\eta_2}{2}t} }{R}\eta_2 \Bigl(e^{\frac{R}{2}t} - e^{-\frac{R}{2}t}\Bigr),\nonumber\\
&B_2(t) = \frac{e^{-\frac{\mu_1+\mu_2+\eta_1+\eta_2}{2}t} }{2R}\Bigl((R+M)e^{\frac{R}{2}t} + (R-M)e^{-\frac{R}{2}t}\Bigr).\label{coefficientiMultinomialiProcessoMorteInterazioni}
\end{align}
\end{theorem}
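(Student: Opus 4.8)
The plan is to read the decomposition directly off the explicit probability generating function \eqref{generatriceProbabilitaMorteMigrazione}, which has already been computed, by recognizing its two factors as Multinomial generating functions. The one ingredient needed is the generating function of a bivariate Multinomial law: if $(X_1,X_2)\sim Multinomial(n,p_1,p_2)$, with the residual mass $1-p_1-p_2$ attached to an unrecorded third category (here the units that have died), then
\[
\mathbb{E}\,u^{X_1}v^{X_2}=\bigl(1-p_1-p_2+p_1u+p_2v\bigr)^{n}=\bigl(1-p_1(1-u)-p_2(1-v)\bigr)^{n}.
\]

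With this identity at hand I would observe that the two factors of \eqref{generatriceProbabilitaMorteMigrazione} have exactly this shape: the first is $\bigl(1-A_1(t)(1-u)-B_1(t)(1-v)\bigr)^{n_1}$ and the second is $\bigl(1-A_2(t)(1-u)-B_2(t)(1-v)\bigr)^{n_2}$, with $A_1,B_1,A_2,B_2$ precisely the quantities defined in \eqref{coefficientiMultinomialiProcessoMorteInterazioni}. Taking $X(t)\sim Multinomial\bigl(n_1,A_1(t),B_1(t)\bigr)$ and $Y(t)\sim Multinomial\bigl(n_2,A_2(t),B_2(t)\bigr)$ independent, the product of their generating functions equals $G(t,u,v)$; but this product is also the generating function of the sum $\bigl(X_1(t)+Y_1(t),\,X_2(t)+Y_2(t)\bigr)$, so by uniqueness of the generating function $\bigl(N_1(t),N_2(t)\bigr)\stackrel{d}{=}\bigl(X_1(t)+Y_1(t),\,X_2(t)+Y_2(t)\bigr)$, which is the assertion.

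Since computing the generating function was the laborious part and it is already in place, the only remaining point of substance, and the closest thing to an obstacle, is to check that the coefficients \eqref{coefficientiMultinomialiProcessoMorteInterazioni} are admissible Multinomial parameters, namely $A_i(t),B_i(t)\ge0$ and $A_i(t)+B_i(t)\le1$ for every $t\ge0$. Nonnegativity is immediate: $R=\sqrt{M^2+4\eta_1\eta_2}\ge|M|$ gives $R\pm M\ge0$, while $\eta_1,\eta_2\ge0$ and $e^{Rt/2}-e^{-Rt/2}\ge0$ take care of $B_1$ and $A_2$. The inequality $A_i(t)+B_i(t)\le1$ is the mildly delicate step, and the cleanest justification is the particle-level interpretation: the linearity of the death--migration rates makes the $n_1$ (respectively $n_2$) initial units evolve as independent three-state Markov particles, each occupying at time $t$ group $1$, group $2$, or the absorbing dead state, so that $A_i(t)$, $B_i(t)$ and $1-A_i(t)-B_i(t)$ are exactly the single-particle transition probabilities and hence sum to one. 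Since $A_1(0)+B_1(0)=1$ and the dead state is absorbing, the live mass $A_i(t)+B_i(t)$ is nonincreasing, which yields the bound. This interpretation also explains conceptually why a Multinomial decomposition must appear.
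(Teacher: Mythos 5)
Your proposal is correct and follows essentially the same route as the paper: the paper also rewrites the already-derived generating function \eqref{generatriceProbabilitaMorteMigrazione} in the form $\bigl(1-A_1(t)(1-u)-B_1(t)(1-v)\bigr)^{n_1}\bigl(1-A_2(t)(1-u)-B_2(t)(1-v)\bigr)^{n_2}$ and identifies each factor with a Multinomial generating function. Your verification that the coefficients are admissible parameters (via $R\ge|M|$ and the independent-particle interpretation) is in fact more detailed than the paper's, which merely asserts that "one can check" they lie in $(0,1)$.
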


\begin{proof}
First, one can check that the coefficients in \eqref{coefficientiMultinomialiProcessoMorteInterazioni} are in $(0,1)$ for all $t\ge0$. Then, in light of notation \eqref{coefficientiMultinomialiProcessoMorteInterazioni}, the probability generating function \eqref{generatriceProbabilitaMorteMigrazione} reads 
\begin{equation}\label{generatriceProbabilitaProcessoMorteInterazioneNotazione}
G(t,u,v) = \Bigl(1- A_1(t)(1-u) - B_1(t)(1-v)\Bigr)^{n_1} \Bigl(1- A_2(t)(1-u) - B_2(t)(1-v)\Bigr)^{n_2} 
\end{equation}
which yields the claimed result, reminding that the probability generating function of a multinomial random variable $X = (X_1,X_2)\sim Multinomial (n,p,q)$ is 
$$\mathbb{E}u^{X_1}v^{X_2} = (1-p(1-u)-q(1-v)\big)^n,$$
with $|u|,|v|\le1$, $p,q\in(0,1)$ and $p+q<1$.
\end{proof}

Formula \eqref{generatriceProbabilitaProcessoMorteInterazioneNotazione} permits us to derive the moments of the vector $(N_1, N_2)$. The interestted reader can check that the first moments are coherent with those appearing in Proposition \ref{proposizioneValoreAttesoNascitaMorteInterazione}. For instance the covariance of the two components is, for $t\ge0$,
$$\text{Cov}\bigl(N_1(t),N_2(t)\bigr) = -n_1A_1(t)B_1(t)- n_2A_2(t)B_2(t) $$
which is negative because of the migration behavior (algebraically, $A_1,B_1,A_2,B_2$ are non-negative functions of $t$, see \eqref{coefficientiMultinomialiProcessoMorteInterazioni}).
\\

Theorem \ref{teoremaMorteMigrazioneSommaMultinomiali} permits us to study combinations of the components $N_1,N_2$. It is interesting to notice that, for fixed $t\ge0$, the sum of the components of $(N_1, N_2)$ is the sum of independent binomial random variables, $N_1(t) + N_2(t) = U(t)+V(t)$, with $U(t)\sim Binomial\big(n_1,A_1(t)+B_1(t)\big)$ and $V(t)\sim Binomial\big(n_2,A_2(t)+B_2(t)\big)$ with the parameters given in \eqref{coefficientiMultinomialiProcessoMorteInterazioni}.

\begin{remark}[Extinction]
From the probability generating function we easily derive the extinction probability of the vector process. In particular, notation \eqref{coefficientiMultinomialiProcessoMorteInterazioni}, formulas \eqref{generatriceProbabilitaProcessoMorteInterazioneNotazione} and \eqref{generatriceProbabilitaMorteMigrazioneCosSinIperbolico} provide, for $t\ge0$,
\begin{align}
P\{N_1(t) = 0, N_2(t) = 0\} &= \big(1- A_1(t)- B_1(t)\big)^{n_1} \big(1- A_2(t)- B_2(t)\big)^{n_2}\label{probabilitaEstinzione}\\
& =  \Biggl(1-e^{-\frac{\mu_1+\mu_2+\eta_1+\eta_2}{2}t}\Bigl[\cosh(Rt/2) + \frac{\mu_2+\eta_2-\mu_1+\eta_1}{R}\sinh(Rt/2)\Bigr]\Biggr)^{n_1} \nonumber\\
& \ \ \ \times \Biggl(1-e^{-\frac{\mu_1+\mu_2+\eta_1+\eta_2}{2}t}\Bigl[\cosh(Rt/2) + \frac{\mu_1+\eta_1-\mu_2+\eta_2}{R}\sinh(Rt/2)\Bigr]\Biggr)^{n_2}.\nonumber
\end{align}
Note that as $t\longrightarrow 0$ the extinction probability goes to $1$.

We now consider $\mu_1 = \mu_2 =\mu >0$. In this case, $A_1(t)+B_1(t) = A_2(t) + B_2(t) = e^{-\mu t}\ \forall\ t$ and probability \eqref{probabilitaEstinzione} does not depend neither on $\eta_1,\eta_2$ nor on the distribution of the initial $n_1+n_2$ individuals among the two groups, reading $P\{N_1(t) = 0, N_2(t) = 0\} =(1-e^{-\mu t})^{n_1+n_2}$. This yields the expected extinction time of the vector $(N_1, N_2)$, $T_0 = \inf\{t>0\,:\, N_1(t) = 0, N_2(t) = 0\}$, for $t\ge0$,
\begin{align}
\mathbb{E} T_0 &= \int_0^\infty t P\{T_0\in \dif t\} = \int_0^\infty t \frac{\dif }{\dif t} P\{N_1(t) = 0, N_2(t) = 0\} \dif t \nonumber\\
& =  \int_0^\infty t (n_1+n_2)\mu e^{-\mu t}\bigl( 1-e^{-\mu t}\big)^{n_1+n_2-1}\dif t = \frac{n_1+n_2}{\mu}\sum_{k=0}^{n_1+n_2-1} \binom{n_1+n_2-1}{k}\frac{(-1)^k}{(1+k)^2} \nonumber\\
&= \frac{1}{\mu}\sum_{k=1}^{n_1+n_2} \binom{n_1+n_2}{k}\frac{(-1)^{k+1}}{k} = \sum_{k=1}^{n_1+n_2} \frac{1}{\mu k}, \label{attesaEstinzioneMortiUguali}
\end{align}
where in the last step we used the relationship
\begin{equation}
\sum_{k=1}^{n_1+n_2} \binom{n_1+n_2}{k}\frac{(-1)^{k}}{k} = - \sum_{k=1}^{n_1+n_2} \frac{1}{ k} = -H_{n_1+n_2},\label{relazioneFormaAlternativaNumeroArmonico}
\end{equation}
with $H_{n_1+n_2}$ being the $(n_1+n_2)$-th harmonic number (see Appendix \ref{appendiceDimostrazioneRelazioneFormaAlternativaNumeroArmonico} for the proof of \eqref{relazioneFormaAlternativaNumeroArmonico}).
\\

Note that (also in the case of $\mu_1\not =\mu_2$) by keeping in mind that the sum $N_1+N_2$ is a non-increasing process, the following generating function holds true (we refer to Appendix A of \cite{CO2025} for a similar proof),
\begin{equation}\label{generatriceTempiPrimoPassaggioPopolazioneTotaleMorte}
\sum_{k=0}^{n_1+n_2} u^k P\{T_k >t\}= \frac{1-\mathbb{E} u^{N_1(t) +N_2(t)}}{1-u} = \frac{1-G(t,u,u)}{1-u},\ \ \ t\ge0,\ |u|\le 1,
\end{equation}
where $T_k = \inf\{t>0\,:\, N_1(t) +N_2(t) = k\}$ is the first time that the total population is equal to $0\le k\le n_1+n_2$ and $G$ is the probability generating function \eqref{generatriceProbabilitaMorteMigrazione}. Note that if $k=0$, $T_0$ is the same one appearing in \eqref{attesaEstinzioneMortiUguali} and that formula \eqref{generatriceTempiPrimoPassaggioPopolazioneTotaleMorte}, with $u=1$, coincides with the complement to one of probability \eqref{probabilitaEstinzione}.
\\

Finally, we observe that, in the case of death rates equal to $\mu>0$, with $t\ge 0$,
$P\{T_0\le t\} = (1-e^{-\mu t})^{n_1+n_2} = P\{\max\{T_1, T_2\}\le t\}$
where $T_1$ and $T_2$ are the extinction times of two independent linear death processes of rates $\mu>0$ whose sum of initial units is $n_1+n_2$.\hfill $\diamond$
\end{remark}

\begin{remark}
In the case of symmetric death rates $\mu_1 = \mu_2 = \mu>0$, the probability generating function \eqref{generatriceProbabilitaMorteMigrazione} turns into
\begin{align}
G(t,u,v) &= \Bigg(1-\frac{e^{-\mu t}}{\eta_1+\eta_2}\Big[(1-u)\bigl(\eta_2+\eta_1e^{-(\eta_1+\eta_2) t}\big)+(1-v)\eta_1\bigl(1-e^{-(\eta_1+\eta_2) t}\big)\Big]\Bigg)^{n_1}\nonumber\\
& \ \ \ \times\Bigg(1-\frac{e^{-\mu t}}{\eta_1+\eta_2}\Big[(1-u)\eta_2\bigl(1-e^{-(\eta_1+\eta_2) t}\big)+(1-v)\bigl(\eta_1+\eta_2e^{-(\eta_1+\eta_2) t}\big)\Big]\Bigg)^{n_1}.\label{generatriceProbabilitaMorteMigrazioneMorteSimmetrica}
\end{align}

The interested reader can also evaluate the case when $\mu_1+\eta_1 = \mu_2+\eta_2$, meaning that the rates of the groups are balanced. Indeed, formula \eqref{generatriceProbabilitaMorteMigrazione} simplifies a lot since $M=0, R = 2\sqrt{\eta_1\eta_2}$.
\\

Finally, if we assume a complete symmetry in the rates $\mu_1 = \mu_2, \eta_1 = \eta_2$, we can solve the differential equation \eqref{equazionDifferenzialeGeneratriceProbabilitaMorteMigrazione}  with time-depending rates, i.e. the non-homogeneous version of the birth-migration process. Its probability generating function reads
\begin{align}
G(t,u,v) &= \bigg[1-\frac{e^{-\int_0^t\mu(s) \dif s}}{2}\Big((1-u)\big(1+e^{-2\int_0^t\eta(s) \dif s t}\big)+(1-v)\big(1-e^{-2\int_0^t\eta(s) \dif s}\big)\Big)\bigg]^{n_1}\nonumber\\
& \ \ \ \times\bigg[1-\frac{e^{-\int_0^t\mu(s) \dif s}}{2}\Big((1-u)\big(1-e^{-2\int_0^t\eta(s) \dif s }\big)+(1-v)\big(1+e^{-2\int_0^t\eta(s) \dif s}\big)\Big)\bigg]^{n_2},\label{generatriceProbabilitaMorteMigrazioneCasoSimmetrico}
\end{align}
where $\mu,\eta$ are integrable rate functions. We point out that Theorem \ref{teoremaMorteMigrazioneSommaMultinomiali} suitably holds in this non-homogeneous case.
\hfill$\diamond$
\end{remark}

\begin{remark}[Pure migration process]
Let $\mu_1 = \mu_2 = 0$, then no units of the couple $(N_1,N_2)$ can die. Thus, the death-migration process described above simplifies and it can be written as $(N_1, N_2 = n_1+n_2-N_1)$ where a unit can leave $N_1$ with a rate proportional to $N_1$ and a unit can arrive in $N_1$ with a rate proportional to $n_1+n_2-N_1$. The coefficients in \eqref{notazioneGeneratriceProbabilitaMorteMigrazione} reduce to $M = \eta_1-\eta_2$ and $R = \eta_1+\eta_2$ and those in \eqref{coefficientiMultinomialiProcessoMorteInterazioni} turn into
\begin{align} \label{coefficientiMultinomialiProcessoMigrazionePura}
&\bar A_1(t) =\frac{\eta_2 + \eta_1e^{-(\eta_1+\eta_2)t}}{\eta_1+\eta_2},\ \ \ \bar B_2(t) = 1- \bar A_1(t),\nonumber\\
&\bar A_2(t) = \eta_2\frac{1 - e^{-(\eta_1+\eta_2)t}}{\eta_1+\eta_2},\ \ \ \  \bar B_2(t) = 1- \bar A_2(t).
\end{align}
The migration vector process can be described by one component, say $N_1$, since the other one is $N_2 = n_1+n_2-N_1$. We use the results concerning the death-migration vector process to derive explicit results on the migration process $N_1$. Indeed, in light of the probability generating function $G$ in \eqref{generatriceProbabilitaMorteMigrazione} and the new coefficients in \eqref{coefficientiMultinomialiProcessoMigrazionePura}, we can write (performing some simple calculation), with $t\ge0$ and $|u|\le 1$,
\begin{align} \label{funzioneGeneratriceProbabilitaProcessoMigrazionePura}
\mathbb{E} u^{N_1(t)}&= G(t, u, 1) = \big(1-(1-u)\bar A_1(t)\big)^{n_1}\big(1-(1-u)\bar A_2(t)\big)^{n_2}  \\
& = u^{n_1+n_2} G(t,1,u^{-1})=  \mathbb{E}u^{n_1+n_2-N_2(t)}, \nonumber
\end{align}
also conferming that $N_1(t)=n_1+n_2 -N_2(t)\ \forall\ t$.

Theorem \ref{teoremaMorteMigrazioneSommaMultinomiali} permits us to derive the probability law of the migration process $N_1$, that is, for fixed $t\ge0$, 
\begin{equation}\label{processoMigrazionePuraDecomposizione}
N_1(t)  = X_t+ Y_t, \ \ \ X_t \sim Binomial\big(n_1, \bar A_1(t)\big),\ Y_t\sim Binomial\big(n_2, \bar A_2(t)\big),
\end{equation}
with $X_t,Y_t$ independent random variables. 

Since $N_1$ is an irreducible continuous-time Markov chain on the finite state space $\{0,\dots,n_1+n_2\}$, it admits a stationary distribution. By noticing that $\lim_{t\rightarrow\infty }\bar A_1(t) = \lim_{t\rightarrow\infty }\bar A_2(t) =\eta_2/(\eta_1+\eta_2)$, result \eqref{processoMigrazionePuraDecomposizione} yields
\begin{equation}
N_1(t)\stackrel{d}{ \substack{\xrightarrow{\hspace*{1.7cm}}\\t\longrightarrow \infty }}N\sim Binomial\Big(n_1+n_2, \frac{\eta_2}{\eta_1+\eta_2}\Big).
\end{equation}

Finally, we derive the difference-differential equation governing by the probability law of the migration process $N_1$ and the partial differential equation governing by the probability generating function. The infinitesimal behavior of $N_1$ is, for $t\ge0$ and $k\in\{0,\dots, n_1 + n_2\}$,
\begin{align}\label{definizioneInfinitesimaleProcessoMigrazionePura}
P\{N_1(t+\dif t) = k+i\,|N_1(t) = k\} = 
\begin{cases}\begin{array}{l l}
\eta_1 k\dif t+o(\dif t), & i=-1,\\
\eta_2 (n_1+n_2-k)\dif t +o(\dif t), & i=1,\\
1-\big(\eta_1 k + \eta_2(n_1+n_2-k)\big)\dif t+ o(\dif t), & i=0,\\
o(\dif t),& otherwise.
\end{array}\end{cases}
\end{align}
By keeping in mind that $N_1$ has independent increments, we can derive the governing equation of its probability mass function. Let $p_k(t) = P\{N_1(t) = k\}$ (i.e. $p_k(t) = 0$ if $k\not\in\{0,\dots,n_1+n_2\}$) and $t\ge0$, then
\begin{align}\label{equazioneDifferenzialeLeggeProcessoMigrazionePura}
\frac{\dif p_{k}(t)}{\dif t}&= -\Bigl(\eta_1 k +\eta_2(n_1+n_2-k)\Big) p_{k}(t) \\
& \ \ \ + \eta_1 (k+1)p_{k+1}(t)+\eta_2(n_1+n_2-k)p_{k-1}(t) ,\ \ \ 0\le k\le n_1+n_2,\nonumber 
\end{align}
with initial condition $p_k(0) = 1$ if $k=n_1$ and $p_k(0) = 0$ otherwise.
Note that for a non-linear process, formula \eqref{definizioneInfinitesimaleProcessoMigrazionePura} and equation \eqref{equazioneDifferenzialeLeggeProcessoMigrazionePura} suitably modify. 

From \eqref{equazioneDifferenzialeLeggeProcessoMigrazionePura} some calculation yield the differential equation governing the probability generating function $\bar G(t,u) = \sum_{k=0}^{n_1+n_2} u^k p_k(t)$, with $|u|\le1$,
\begin{equation}\label{equazioneDifferenzialeProbabilitaGeneratriceProcessoMigrazionePura}
\frac{\partial \bar G}{\partial t}= -\eta_2(n_1+n_2)(u-1)\bar G - \big(\eta_2u^2 +u(\eta_1+\eta_2) -\eta_1\big)	\frac{\partial \bar G}{\partial u},
\end{equation}
with initial condition $\bar G(0,u) = u^{n_1}$. Note that the solution to \eqref{equazioneDifferenzialeProbabilitaGeneratriceProcessoMigrazionePura} is given in formula \eqref{funzioneGeneratriceProbabilitaProcessoMigrazionePura},  $\bar G(t,u) =\mathbb{E}u^{N_1(t)} =  G(t,u,1)$. \hfill $\diamond$
\end{remark}

\subsection{Difference of independent birth-death processes}

Inspired by the definition of the Skellam process as the difference of two independent Poisson processes, here we derive some explicit results for the difference of two independent birth-death processes, $N_1$ and $N_2$. We recall that for a (homogeneous) birth-death process $N$, the probability mass at time $t\ge0$ reads
\begin{equation}\label{massaProbabilitaNascitaMorte}
P\{N(t) =k\} = 
\begin{cases}\begin{array}{l l}
\displaystyle e^{-(\lambda-\mu)t}(\lambda-\mu)^2\frac{\big(\lambda(1-e^{-(\lambda-\mu)t})\big)^{k-1}}{\big(\lambda-\mu e^{-(\lambda-\mu)t}\big)^{k+1}},& k\in \mathbb{N},\\
\displaystyle \frac{\mu(1-e^{-(\lambda-\mu)t})}{\lambda -\mu e^{-(\lambda-\mu)t}},& k=0.
\end{array}
\end{cases}
\end{equation}

Now, the distribution of the difference of two independent birth-death processes $N_1,N_2$ with rates $\lambda_1,\mu_1$ and $\lambda_2,\mu_2$ respectively, reads, for $k\in\mathbb{N}, \ t\ge 0,$
\begin{align}
&P\{N_1(t)-N_2(t) = k\} = P\{N_1(t) = k\}P\{N_2(t) = 0\} + \sum_{h=1}^\infty P\{N_1(t) = h+k\} P\{N_2(t) = h\} \nonumber\\
& = P\{N_1(t) = k\}P\{N_2(t) = 0\} + e^{-(\lambda_1-\mu_1)t-(\lambda_2-\mu_2)t}(\lambda_1-\mu_1)^2(\lambda_2-\mu_2)^2 \nonumber\\
&\ \ \ \times\frac{\big(\lambda_1(1-e^{-(\lambda_1-\mu_1)t})\big)^{k-1}}{\big(\lambda_1-\mu_1 e^{-(\lambda_1-\mu_1)t}\big)^{k+1}}\frac{\big(\lambda_2(1-e^{-(\lambda_2-\mu_2)t})\big)^{-1}}{\big(\lambda_2-\mu_2 e^{-(\lambda_2-\mu_2)t}\big)} \sum_{h=1}^\infty \Bigg(\frac{\lambda_1\big(1-e^{-(\lambda_1-\mu_1)t}\big)}{\big(\lambda_1-\mu_1 e^{-(\lambda_1-\mu_1)t}\big)} \frac{\lambda_2\big(1-e^{-(\lambda_2-\mu_2)t}\big)}{\big(\lambda_2-\mu_2 e^{-(\lambda_2-\mu_2)t}\big)}\Bigg)^{h} \nonumber\\
&=  P\{N_1(t) = k\}P\{N_2(t) = 0\} + e^{-(\lambda_1-\mu_1)t-(\lambda_2-\mu_2)t}(\lambda_1-\mu_1)^2(\lambda_2-\mu_2)^2 \nonumber\\
&\ \ \ \times\frac{\big(\lambda_1(1-e^{-(\lambda_1-\mu_1)t})\big)^{k-1}}{\big(\lambda_1-\mu_1 e^{-(\lambda_1-\mu_1)t}\big)^{k+1}}\frac{\big(\lambda_2(1-e^{-(\lambda_2-\mu_2)t})\big)^{-1}}{\big(\lambda_2-\mu_2 e^{-(\lambda_2-\mu_2)t}\big)} \frac{ \frac{\lambda_1\big(1-e^{-(\lambda_1-\mu_1)t}\big)}{\big(\lambda_1-\mu_1 e^{-(\lambda_1-\mu_1)t}\big)} \frac{\lambda_2\big(1-e^{-(\lambda_2-\mu_2)t}\big)}{\big(\lambda_2-\mu_2 e^{-(\lambda_2-\mu_2)t}\big)}}{1-  \frac{\lambda_1\big(1-e^{-(\lambda_1-\mu_1)t}\big)}{\big(\lambda_1-\mu_1 e^{-(\lambda_1-\mu_1)t}\big)} \frac{\lambda_2\big(1-e^{-(\lambda_2-\mu_2)t}\big)}{\big(\lambda_2-\mu_2 e^{-(\lambda_2-\mu_2)t}\big)} }\nonumber \\
&=  P\{N_1(t) = k\} \Biggl( P\{N_2(t) = 0\} + \frac{ \frac{\lambda_1}{\mu_1}P\{N_1(t) = 0\}P\{N_2(t) = 1\}}{1-\frac{\lambda_1}{\mu_1} P\{N_1(t) = 0\}\frac{\lambda_2}{\mu_2} P\{N_2(t) = 0\}} \Biggr),\label{probabilitaDifferenzaNascitaMorteGenerale}
\end{align}
where we used \eqref{massaProbabilitaNascitaMorte} in the last step. The probability for $k<0$ follows in a similar manner (or one can simply switch the subscripts $1$ and $2$ and replace $k$ with $|k|$). By means of the same arguments we also derive the probability that the difference process is equal to $0$, that is 
$$ P\{N_1(t)-N_2(t) = 0\} = P\{N_1(t) = 0\}P\{N_2(t) = 0\} + \frac{P\{N_1(t)=1\}P\{N_2(t)=1\}}{1-\frac{\lambda_1}{\mu_1} P\{N_1(t) = 0\}\frac{\lambda_2}{\mu_2} P\{N_2(t) = 0\}}.$$

Note that in the case of $\lambda_1 = \lambda_2 = \lambda$ and $\mu_1 = \mu_2 = \mu$ and considering $N(t)\stackrel{d}{=}N_1(t)\stackrel{d}{=}N_2(t)\ \forall\ t,$ formula \eqref{probabilitaDifferenzaNascitaMorteGenerale} reduces to 
$$ P\{N_1(t)-N_2(t) = k\} = P\{N(t)= |k|\}\frac{(\lambda+\mu)P\{N(t)= 0\}}{\mu + \lambda P\{N(t)= 0\}},\ \ \ k\in\mathbb{Z}\setminus\{0\}, \ t\ge 0,$$
since
$$ 1+\frac{\frac{\lambda}{\mu}P\{N(t)=1\}}{1-\big(\frac{\lambda}{\mu}P\{N(t)=0\}\big)^2} = \frac{\lambda+\mu}{\mu+\lambda P\{N(t)=0\}},$$ which can be derived by means of \eqref{massaProbabilitaNascitaMorte}.

\section{Time-changed interacting point processes}

In this section we present a result which provides a characterization of the interacting point processes discussed above, time-changed with the inverse of a subordinator with L\'{e}vy symbol a Bernstein function $f$. 

We recall that $f:[0,\infty)\longrightarrow [0,\infty)$ is a Bernstein function if $f\in C^\infty, \ (-1)^n \dif^n f/\dif x^n \le 0\ \forall \ n\ge1$ and it can be expressed as
\begin{equation}
f(x) = a+bx+ \int_0^\infty\Bigl( 1-e^{-xw}\Bigr) \bar \nu_f(\dif w), \ \ \ x\ge0,
\end{equation}
where $a,b\ge0$ and $\bar \nu_f$ is a L\'{e}vy measure, i.e. such that $\int_0^\infty (s\wedge1)\bar\nu_f(\dif s)<\infty$.
Bernstein functions are related to non-decreasing L\'{e}vy processes, also known as subordinators. Indeed, for each Bernstein function $f$ there exists a subordinator $ H_f$ such that $f$ is the L\'{e}vy symbol of $ H_f$, i.e. $\mathbb{E}e^{-\mu H_f(t)} = e^{-tf(\mu)}, \ \mu, t\ge0$. 

The inverse of the subordinator $ H_f$ is the stochastic process $L_f$ defined as
\begin{equation}
L_f(t) = \inf\{x\ge 0\,:\,  H_f(x) \ge t\}, \ \ \ t\ge0,
\end{equation}
which describes the first passage time through level $x$ of the subordinator $ H_f$, therefore, also $L_f$ is non-decreasing (and non-negative). 
\\

Following the line of \cite{BS2024}, we consider the convolution-type derivative with respect to the Bernstein function $f$, appeared in \cite{T2015} (also known as generalized Caputo-Dzerbashyan derivative, see \cite{K2011}), $\mathcal{D}_t^f$, which is defined on the space of absolutely continuous functions as follows
\begin{equation}\label{definizioneDervitaConvoluzionale}
\mathcal{D}_t^f u(t) = b\frac{\dif }{\dif t}u(t) + \int_0^t \frac{\partial }{\partial t}u(t-s)\nu_f(s)\dif s,
\end{equation}
where $\nu_f$ is the tail of the L\'{e}vy measure $\bar \nu_f$.

It is well-known that for $f(x) = x^\alpha$, with $\alpha\in (0,1)$, the L\'{e}vy measure is $\bar\nu_f(s)= \alpha x^{-\alpha-1}/\Gamma(1-\alpha)$, its tail is $\nu_f(x) = x^{-\alpha}/\Gamma(1-\alpha)$ and the operator in \eqref{definizioneDervitaConvoluzionale} reduces to the Caputo-Dzerbashyan derivative. We refer to \cite{BS2024, MT2019, T2015} for further details on the convolution-type derivative $\mathcal{D}_t^f $.
\\

Hereafter we assume the following condition.
\\
\textbf{Condition I.} The L\'{e}vy measure $\bar\nu_f$ associated to the Bernstein function $f$ is such that $\bar\nu(0,\infty) = \infty$ and its tail $\nu_f(s) = a+\bar \nu_f(s,\infty)$ is absolutely continuous.
\\

It was shown in \cite{T2015} that under Condition I, the inverse $L_f$ admits a probability density $l_f(t,x) = P\{L_f(t)\in\dif x\}/\dif x$ with $l_f(0,x) = \delta(x)$, where $\delta$ is the Dirac delta function centered in the origin, and such that its $x$-Laplace transform is an eigenvalue of the operator $\mathcal{D}_f$. Indeed, the function
\begin{equation}\label{trasformataLaplaceLeggeInversoSubordinatore}
\tilde l(t,y) = \mathcal{L}_x\{ l_f\}(t,y) = \int_0^\infty e^{-yx} l(t,x)\dif x = \mathbb{E}e^{-y L_f(t)}, \ \ \ t,y\ge 0,
\end{equation}
where $\mathcal{L}_x$ denotes the $x$-Laplace transform operator, satisfies the equation
\begin{equation*}\label{autofonzioneOperatoreConvoluzionale}
\mathcal{D}_t^f \tilde l(t,y) = -y\tilde l(t,y),\ t,y\ge 0,\ \ \ \tilde l_f(0,y) = 1,\ \forall\ y,
\end{equation*}
also see \cite{K2011} for further details.

In order to prove the next statement we also recall that in \cite{T2015} the author introduced another convolution-type derivative generalizing the Riemann-Liouville fractional derivative, 
\begin{align}
\mathbb{D}_t^f u(t) =  b\frac{\dif }{\dif t}u(t) + \frac{\dif }{\dif t}\int_0^t u(t-s)\nu_f(s)\dif s,
\end{align}
which relates to the convolution-typer derivative \eqref{definizioneDervitaConvoluzionale} by means of (see Proposition 2.7 of \cite{T2015})
\begin{align}\label{relazioneOperatoriConvoluzione}
\mathcal{D}_t^f u(t) =\mathbb{D}_t^f u(t) - \nu_f(t) u(0).
\end{align}
Furthermore, it is useful to recall (see Theorem 4.1 of \cite{T2015}) that the probability density $l_f$ satisfies the problem, with  $x>0$ if $b = 0$ or $0<x<t/b$ if $b>0$,
\begin{align}\label{legameDerivataConvoluzionaleLiouvilleOperatoreSpaziale}
\mathbb{D}_t^f l_f(t,x)= -\frac{\partial }{\partial x}l_f(t,x), \ \ \ t\ge0,
\end{align}
subject to $l_f(t,0) = \nu_f(t), \ l_f(t, t/b)=0$ and $l_f(0,x) = \delta(x)$.

The next statement provides a characterization of the difference-differential equation solved by the probability mass function of a point process time-changed with the inverse of a Bernstein subordinator, $L_f$.

\begin{theorem}\label{teoremaProcessiPuntoConTempoInversoSubordinatore}
Let $N$ be a $d$-dimensional point process with probability mass function $p_n(t) = P\{N(t) = n\},\ n\in\mathbb{Z}^d,$ and $p_n^f$ the probability mass function of its time-changed version $N\circ L_f$. If Condition I holds and $p_n$ satisfies the difference-differential equation 
\begin{equation}\label{equazioneProcessoPuntoGenerale}
\frac{\dif }{\dif t}p_n(t) = O_d \,p_n(t),\ t\ge0, n\in\mathbb{Z}^d,\ \ \ p_n(0) = q(n),\ n\in\mathbb{Z}^d,
\end{equation}
with $q$ being a probability mass function over $\mathbb{Z}^d$ and $O_d$ being a difference operator such that $\forall \ n$,
\begin{align}\label{ipotesiOperatoreDifferenze}
 O_d\, p_n^f(t) = \int_0^\infty O_d\, p_n(x) l_f(t,x)\dif x,\ \forall\ t,
\end{align}
then, $p_n^f$  satisfies the equation
\begin{equation}\label{equazioneGeneraleDervitaConvoluzionaleProcessoPunto}
\mathcal{D}_t^f q_n(t) = O_d \,q_n(t), \ t\ge0, n\in\mathbb{Z}^d,\ \ \ q_n(0) = q(n),\ n\in\mathbb{Z}^d.
\end{equation}
\end{theorem}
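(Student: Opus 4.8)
The plan is to exploit the subordination representation of the time-changed process together with the space-time duality satisfied by the density $l_f$. As is standard in this time-change setting, I would assume $N$ and $L_f$ to be independent; conditioning on the value of $L_f(t)$ then expresses the probability mass function of $N\circ L_f$ as a mixture of the original masses against the law of the inverse subordinator,
\[
p_n^f(t) = \mathbb{E}\,p_n\big(L_f(t)\big) = \int_0^\infty p_n(x)\, l_f(t,x)\,\dif x .
\]
In particular, since $l_f(0,x)=\delta(x)$, the initial condition $p_n^f(0)=p_n(0)=q(n)$ is immediate, which already accounts for the initial datum in \eqref{equazioneGeneraleDervitaConvoluzionaleProcessoPunto}.

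Next I would apply the Riemann--Liouville-type operator $\mathbb{D}_t^f$ to this representation, interchanging it with the $x$-integration, and invoke the governing relation $\mathbb{D}_t^f l_f(t,x) = -\partial_x l_f(t,x)$ recalled in \eqref{legameDerivataConvoluzionaleLiouvilleOperatoreSpaziale}. An integration by parts in $x$ then transfers the derivative onto $p_n$: the boundary term at the upper endpoint vanishes (because $l_f(t,x)\to0$ as $x\to\infty$ when $b=0$, or because $l_f(t,t/b)=0$ when $b>0$), while the boundary term at $x=0$ produces exactly $q(n)\,\nu_f(t)$, using $l_f(t,0)=\nu_f(t)$. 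This gives
\[
\mathbb{D}_t^f p_n^f(t) = q(n)\,\nu_f(t) + \int_0^\infty \frac{\partial }{\partial x}p_n(x)\, l_f(t,x)\,\dif x .
\]

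Then I would pass from $\mathbb{D}_t^f$ to the Caputo--Dzerbashyan-type operator $\mathcal{D}_t^f$ via the relation \eqref{relazioneOperatoriConvoluzione}, namely $\mathcal{D}_t^f p_n^f(t) = \mathbb{D}_t^f p_n^f(t) - \nu_f(t)\,p_n^f(0)$. Since $p_n^f(0)=q(n)$, the spurious contribution $q(n)\,\nu_f(t)$ cancels precisely, leaving $\mathcal{D}_t^f p_n^f(t) = \int_0^\infty \partial_x p_n(x)\, l_f(t,x)\,\dif x$. Finally, replacing $\partial_x p_n(x)$ by $O_d\,p_n(x)$ through the hypothesis \eqref{equazioneProcessoPuntoGenerale}, and then using the commutation assumption \eqref{ipotesiOperatoreDifferenze} to pull $O_d$ out of the integral, yields $\mathcal{D}_t^f p_n^f(t)=O_d\,p_n^f(t)$, which is the claim.

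The steps that require care---and which I expect to be the main obstacle---are the analytic justifications rather than the formal algebra: interchanging the nonlocal operator $\mathbb{D}_t^f$ (in particular its convolution part $\tfrac{\dif}{\dif t}\int_0^t(\,\cdot\,)(t-s)\,\nu_f(s)\,\dif s$) with the $x$-integral, and validating the integration by parts, i.e.\ the existence and vanishing of the endpoint terms and the integrability needed to apply Fubini. \emph{Condition I} (ensuring that $l_f$ exists, has the stated boundary behaviour, and an absolutely continuous tail $\nu_f$) is what makes these manipulations legitimate, while the commutation hypothesis \eqref{ipotesiOperatoreDifferenze} is exactly what is needed so that the identity obtained for the mixed masses can be re-expressed intrinsically in terms of $p_n^f$.
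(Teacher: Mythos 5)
Your proposal is correct and follows essentially the same route as the paper's proof: both start from the subordination representation $p_n^f(t)=\int_0^\infty p_n(x)\,l_f(t,x)\,\dif x$, pass through $\mathbb{D}_t^f$ via \eqref{relazioneOperatoriConvoluzione}, use \eqref{legameDerivataConvoluzionaleLiouvilleOperatoreSpaziale} and an integration by parts with $l_f(t,0)=\nu_f(t)$ so that the boundary term cancels against $\nu_f(t)\,p_n^f(0)$, and then conclude with \eqref{equazioneProcessoPuntoGenerale} and \eqref{ipotesiOperatoreDifferenze}. Your closing remarks on justifying the interchange of $\mathbb{D}_t^f$ with the $x$-integral are a welcome addition, as the paper only notes this step in passing.
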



\begin{proof}
First, we observe that
$$p_n^f(0) = \int_0^\infty  l(0,x)p_n(x)\dif x = \int_0^\infty \delta(x) p_n(x) \dif x = p_n(0) = q(n),$$
meaning that $p_n^f$ satisfies the initial condition in  \eqref{equazioneGeneraleDervitaConvoluzionaleProcessoPunto}. Now we show that it also satisfies the equation in \eqref{equazioneGeneraleDervitaConvoluzionaleProcessoPunto},
\begin{align}
\mathcal{D}_t^f p_n^f(t) &= \mathbb{D}_t^f p_n^f(t) - \nu_f(t)p_n^f(0) \nonumber\\
& = \int_0^\infty p_n(x) \mathbb{D}_t^f l_f(t,x)\dif x- \nu_f(t)p_n^f(0)\nonumber\\
& = - \int_0^\infty p_n(x) \frac{\partial }{\partial x}l_f(t,x)\dif x - \nu_f(t)p_n^f(0)\nonumber\\
& = p_n(0)l_f(t,0)  + \int_0^\infty \frac{\dif }{\dif x}p_n(x) l_f(t,x)\dif x - \nu_f(t)p_n^f(0) \nonumber\\
& = \nu_f(t) \big(p_n(0) - p_n^f(0)\big) +\int_0^\infty O_d\, p_n(x) l_f(t,x)\dif x\nonumber\\
& =  O_dp_n^f(x).\nonumber
\end{align}
Note that in the second equality we exchanged $\mathbb{D}_t^f p_n^f(t) =\int_0^\infty p_n(x) \mathbb{D}_t^f l_f(t,x)\dif x$ since $\frac{\dif}{\dif t} p_n^f(t) =\int_0^\infty p_n(x) \frac{\dif}{\dif t} l_f(t,x)\dif x$. In the first step we used \eqref{relazioneOperatoriConvoluzione}, in the third one \eqref{legameDerivataConvoluzionaleLiouvilleOperatoreSpaziale}, in the fourth one \eqref{equazioneProcessoPuntoGenerale} abd in the last one the initial conditions and \eqref{ipotesiOperatoreDifferenze}.
\end{proof}

From Theorem \ref{teoremaProcessiPuntoConTempoInversoSubordinatore} we derive  the following result concerning the vector processes studied in the previous sections.

\begin{corollary}
Let $L_f$ being the inverse of a subordinator with L\'{e}vy symbol a Bernstein function $f$ such that Condition I holds. 
\begin{itemize}
\item[($i$)] Let $(N_1,N_2)$ the homogeneous interacting Skellam vector process defined in \eqref{definizioneInfinitesimaleCoppiaTipoSkellam}, then the probability mass function of $(N_1\circ L_f,N_2\circ L_f)$ satisfies equation \eqref{equazioneStatoProcessoSkellamInterazione} with the time derivative replaced by the convolution-type derivative $\mathcal{D}_t^f$.

\item[($ii$)]  Let $(N_1,N_2)$ the homogeneous birth-death-migration vector process defined in \eqref{definizioneInfinitesimaleProcessoNascitaMorteInterazione}, then the probability mass function of $(N_1\circ L_f,N_2\circ L_f)$ satisfies equation \eqref{equazioneStatoProcessoNascitaMorteInterazione} with the time derivative replaced by the convolution-type derivative $\mathcal{D}_t^f$.
\end{itemize}
\end{corollary}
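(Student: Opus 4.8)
The plan is to apply Theorem \ref{teoremaProcessiPuntoConTempoInversoSubordinatore} with $d=2$ to each of the two homogeneous vector processes. First I identify, in each case, the difference operator $O_2$ appearing in \eqref{equazioneProcessoPuntoGenerale}: in case ($i$) I take $O_2$ to be the right-hand side operator of the state equation \eqref{equazioneStatoProcessoSkellamInterazione}, and in case ($ii$) the right-hand side operator of \eqref{equazioneStatoProcessoNascitaMorteInterazione}. In the homogeneous setting all the rate functions are constant, so in both cases the state equation is exactly of the form $\frac{\dif}{\dif t}p_n(t) = O_2\,p_n(t)$ required by \eqref{equazioneProcessoPuntoGenerale}; the initial condition $p_n(0)=q(n)$ is the degenerate mass concentrated at $(n_1,n_2)$, which is a legitimate probability mass function on $\mathbb{Z}^2$ (for the birth-death case we set $p_{h,k}\equiv 0$ for $(h,k)\notin\mathbb{N}_0^2$, consistently with \eqref{equazioneStatoProcessoNascitaMorteInterazione}).

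It then remains only to verify hypothesis \eqref{ipotesiOperatoreDifferenze}. Conditioning on the value of $L_f(t)$ gives the representation $p_n^f(t)=\int_0^\infty p_n(x)\,l_f(t,x)\,\dif x$. In both cases $O_2$ is a \emph{finite} linear combination of index-shifts (of the type $p_{h,k}\mapsto p_{h\pm1,k},\,p_{h,k\pm1},\,p_{h\pm1,k\mp1},\dots$) whose coefficients depend only on the spatial index $(h,k)$ and --- crucially, because of the assumed homogeneity --- not on the time variable. Each such coefficient is therefore constant with respect to the integration variable $x$, so $O_2$ commutes with $\int_0^\infty(\,\cdot\,)\,l_f(t,x)\,\dif x$ and
$$ O_2\,p_n^f(t)=O_2\int_0^\infty p_n(x)\,l_f(t,x)\,\dif x=\int_0^\infty O_2\,p_n(x)\,l_f(t,x)\,\dif x, $$
which is precisely \eqref{ipotesiOperatoreDifferenze}. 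I stress that for case ($ii$) the coefficients of $O_2$ do depend on $(h,k)$, but this is harmless since they remain constant in $x$.

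The step I expect to be the only delicate one is the interchange of $O_2$ with the integral. Here it is immediate: since $O_2$ consists of finitely many shift terms, no infinite summation has to be exchanged with the integral, and linearity of the integral suffices once each $p_n^f(t)$ is known to be finite, which holds because it is a probability. With \eqref{ipotesiOperatoreDifferenze} and the form \eqref{equazioneProcessoPuntoGenerale} established, Theorem \ref{teoremaProcessiPuntoConTempoInversoSubordinatore} gives that $p_n^f$ solves \eqref{equazioneGeneraleDervitaConvoluzionaleProcessoPunto}, i.e. the respective state equation with $\frac{\dif}{\dif t}$ replaced by the convolution-type derivative $\mathcal{D}_t^f$. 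This proves both ($i$) and ($ii$).
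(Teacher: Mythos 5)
Your proof is correct and follows essentially the same route as the paper: identify the right-hand sides of \eqref{equazioneStatoProcessoSkellamInterazione} and \eqref{equazioneStatoProcessoNascitaMorteInterazione} as the operator $O_d$, note that homogeneity makes it time-independent, and invoke Theorem \ref{teoremaProcessiPuntoConTempoInversoSubordinatore}. The only difference is that you spell out why hypothesis \eqref{ipotesiOperatoreDifferenze} holds (a finite linear combination of index shifts with coefficients constant in the integration variable commutes with the integral against $l_f(t,x)$), a detail the paper's two-line proof leaves implicit.
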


\begin{proof}
In the homogeneous case the difference operators appearing on the right-hand sides of \eqref{equazioneStatoProcessoSkellamInterazione} and \eqref{equazioneStatoProcessoNascitaMorteInterazione} do not depend on $t$ and they satisfy hypothesis \eqref{ipotesiOperatoreDifferenze}. Thus, by applying Theorem \ref{teoremaProcessiPuntoConTempoInversoSubordinatore} the proof completes.
\end{proof}

\begin{remark}
We point out that, in the case of a homogeneous Skellam-type vector process, the counting processes described in Remark \ref{remarkConteggioEventiSkellamInterazione} are renewal processes. This property is inherited by the time-changed interacting Skellam vector process. For instance the process of the type \eqref{processoConteggioVariazioniSkellamInterazione}, which counts the number of times that the time-changed vector modifies its state is a renewal process with i.i.d. waiting times with distribution, for $t\ge0$,
$$ P\{J>t\} = \mathbb{E}e^{-\bar \lambda L_f(t)} =  \tilde l(t, \bar\lambda),$$
where $\bar\lambda>0$ is the constant rate of the counting process in absence of the time-changed, that is a homogeneous Poisson process (since we are dealing with homogeneous cases only), see Remark \ref{remarkConteggioEventiSkellamInterazione}. We refer to \cite{K2011} for further details on this topic. \hfill $\diamond$
\end{remark}

\appendix

\section{Proof of equation \eqref{relazioneFormaAlternativaNumeroArmonico} }\label{appendiceDimostrazioneRelazioneFormaAlternativaNumeroArmonico}
Let $n\in \mathbb{N}$,
\begin{align*}
\sum_{k=1}^{n} \binom{n}{k} \frac{(-1)^k}{k} &=\frac{(-1)^n}{n} + \sum_{k=1}^{n-1} \binom{n-1}{k} \frac{(n \pm k)(-1)^k }{k(n-k)}\nonumber \\
& = \frac{(-1)^n}{n} + \sum_{k=1}^{n-1} \binom{n-1}{k} \frac{ (-1)^k }{k} +\frac{1}{n} \sum_{k=1}^{n-1} \binom{n}{k} (-1)^k \nonumber\\
& = \frac{(-1)^n}{n} +  \sum_{k=1}^{n-1} \binom{n-1}{k} \frac{ (-1)^k }{k} - \frac{1}{n}\big(1 + (-1)^n\big)\\
&= \sum_{k=1}^{n-1} \binom{n-1}{k} \frac{ (-1)^k }{k} -\frac{1}{n}
\end{align*}
and by iterating the reasoning we obtain the desired relationship,
$$ \sum_{k=1}^{n} \binom{n}{k} \frac{(-1)^k}{k} = - \sum_{k=1}^{n} \frac{ 1 }{k}  = -H_n, \ \ \ n\in\mathbb{N}.$$




\footnotesize{

}

\end{document}